\title{A Bound for Higher Degree Automorphisms}
\author{Robert Rust}
\newlist{toclikeI}{enumerate}{1}
\newlist{toclikeII}{enumerate}{1}
\setlist[toclikeII]{align=left}
\newtheorem{theorem}{Theorem}[section]
\newtheorem{lemma}[theorem]{Lemma}
\newtheorem{definition}[theorem]{Definition}
\newtheorem*{fact}{Fact}
\newtheorem{conjecture}[theorem]{Conjecture}
\begin{document}

\begin{titlepage}
    \begin{center}
\vspace*{-.5in}
\includegraphics[width=4in]{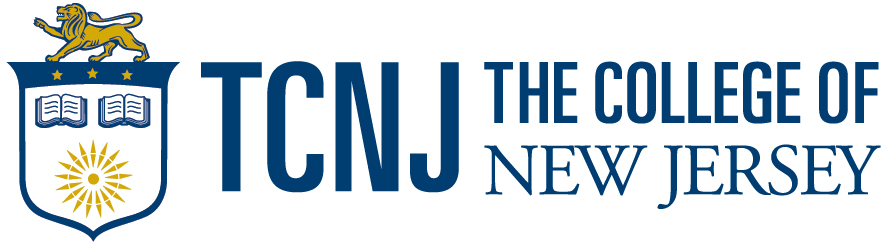}\\
\vspace*{1.5in}
\Huge{\bf A Bound for Higher Degree Automorphisms of $\mathbf{F_n}$}\\
\vspace*{1in}
\LARGE{Robert Rust}\\
\vspace*{1.5in}

\large{Honors Thesis\\
Department of Mathematics and Statistics\\
Advisor: Professor Andrew Clifford\\}
\vspace*{.5in}
{May 2023}\\
\end{center}
\end{titlepage}

\begin{abstract}
    The Whitehead Model of free groups can be used to measure the complexity, or degree, of automorphisms of free groups. The bound for the degree of the $f \circ g$ for deg$(f) =$ deg($g) = 0$ had previously been discovered. We extend this result to the case where at least one of our automorphisms has degree 0.
\end{abstract}
\newpage
\noindent

\tableofcontents
\newpage

\section{Free Groups}

 Let $X = \{a, b, c, \ldots\}$ be a set of elements. We call \[X \cup X^{-1} = \{a, a^{-1},b, b^{-1},c, c^{-1}, \ldots\}\] the \textbf{balanced alphabet} on X, with the elements of our alphabet called \textbf{letters} (we assume $X \cap X^{-1} = \emptyset$). We call a finite string of letters $w \in W[X] = \{\alpha_1 \alpha_2 \ldots \alpha_n | \alpha_i \in X \cup X^{-1}\}$ a \textbf{word}. The \textbf{length} of a word, denoted as $|w|$ is the number of letters that occur in the word. The \textbf{empty word} is a word which has length zero, henceforth written as just 1 \cite{Stillwell1995}.\\
 When we have a string of the letters of the form $x_jx_j^{-1}$, we call this a \textbf{trivial relator} and we say a word $w$ is \textbf{freely reduced } if no trivial relators occur in $w$. We call two words $w$ and $w'$ \textbf{freely equivalent} if $w'$ can be obtained from $w$ by inserting or removing a finite number of trivial relators. We denote this $w \approx w'$.\\
 We can define an operation on the words of $X \cup X^{-1}$ that is both associative and has the identity 1, making it a monoid. Taking the product of two words $w_1$ and $w_2$, we write $w_1w_2$ as the juxtaposition of the two words. For example, for the words $w_1 = ac^{-1}d$ and $w_2 = b^{-1}e$, their product is $w_1w_2 = ac^{-1}db^{-1}e$ \cite{Lyndon2001}.\\

 \begin{fact}
 Let $S$ be a set of distinct symbols. Then every word in W[S] has a unique freely reduced form. \cite{Hatcher2001}
 \end{fact}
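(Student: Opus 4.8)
The plan is to separate the claim into the \emph{existence} and the \emph{uniqueness} of the freely reduced form. Existence is immediate: removing a trivial relator decreases the length $|w|$ by exactly two, so any sequence of such removals starting from $w$ is finite and terminates at a word containing no trivial relators, i.e. a freely reduced word. The real content of the Fact is uniqueness --- that no matter which relators we choose to remove, and in which order, we always arrive at the same freely reduced word.

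To prove uniqueness I would use strong induction on $|w|$ to establish: if $w$ can be turned into a freely reduced word $r$ by finitely many removals of trivial relators, and also into a freely reduced word $r'$ by another such sequence, then $r = r'$. If $w$ is already freely reduced, no relator can be removed, so $r = r' = w$. Otherwise, consider the first removal in each sequence: say the first begins $w \to w_1$, deleting a trivial relator occupying a pair of adjacent positions $P_1$, and the second begins $w \to w_2$, deleting one at a pair $P_2$; the remainders of the two sequences then give $w_1 \to^* r$ and $w_2 \to^* r'$. If $P_1 = P_2$ then $w_1 = w_2$, and since $|w_1| = |w| - 2 < |w|$ the induction hypothesis applied to $w_1$ gives $r = r'$. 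If $P_1$ and $P_2$ are disjoint, both deletions can be performed, in either order, producing a common word $w_3$ with $w_1 \to w_3$ and $w_2 \to w_3$; picking (via the existence half already proved) a freely reduced $r_3$ with $w_3 \to^* r_3$, the induction hypothesis applied to $w_1$ yields $r = r_3$ and applied to $w_2$ yields $r' = r_3$, so $r = r'$. The only remaining possibility is that $P_1$ and $P_2$ overlap in exactly one position, and I claim this again forces $w_1 = w_2$, reducing to the first case.

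The overlap case is the one subtle point, and I expect it to be the main obstacle to write cleanly. If the first relator sits at positions $k, k+1$ with letters $\alpha, \alpha^{-1}$ and the second at positions $k+1, k+2$, then consistency at position $k+1$ forces those three positions to read $\alpha,\ \alpha^{-1},\ \alpha$; deleting the first relator leaves the $\alpha$ in position $k+2$ while deleting the second leaves the $\alpha$ in position $k$, and in both cases the resulting word is identical. One simply enumerates the handful of such configurations and checks the effect is the same; this is exactly the ``local confluence'' input to Newman's Lemma, which together with the termination noted above is the abstract reason the argument works. A slicker alternative that bypasses the case analysis is the van der Waerden trick: for each letter $\alpha$, define a permutation $\sigma_\alpha$ of the set of freely reduced words by $\sigma_\alpha(r) = \alpha r$ when $r$ does not begin with $\alpha^{-1}$, and otherwise $\sigma_\alpha(r)$ is $r$ with its first letter deleted; verify $\sigma_\alpha \circ \sigma_{\alpha^{-1}} = \mathrm{id}$, send each word $\alpha_1 \cdots \alpha_n$ to $\sigma_{\alpha_1} \circ \cdots \circ \sigma_{\alpha_n}$ evaluated at the empty word, and observe that this assignment is unchanged by inserting or deleting a trivial relator and fixes every freely reduced word --- so the freely reduced form of $w$ is pinned down uniquely. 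I would keep the inductive ``diamond'' argument as the main proof, since it matches the elementary, combinatorial tone of this section, and mention the van der Waerden construction only as a remark.
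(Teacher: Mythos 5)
Your argument is correct and complete. Note that the paper does not prove this statement at all --- it is listed as a \emph{Fact} with a citation to Hatcher and then used to define the equivalence classes $[w]$ --- so there is no in-paper proof to compare against; your write-up supplies exactly what the paper omits. The termination-plus-local-confluence (``diamond'') induction is the standard elementary route, and you have correctly isolated the only delicate point: when the two deleted relators overlap in one position the word must locally read $\alpha\,\alpha^{-1}\,\alpha$ (or its mirror), and both deletions produce the identical string, so the overlap case collapses into the ``same position'' case. The disjoint case commutes, and the strong induction on $|w|$ closes the argument. Your van der Waerden alternative is also correctly sketched (the key verifications being that each $\sigma_\alpha$ carries reduced words to reduced words, that $\sigma_\alpha\circ\sigma_{\alpha^{-1}}=\mathrm{id}$, and that the composite evaluated at the empty word returns a reduced word unchanged), and it has the advantage of proving the slightly stronger statement the paper actually needs next --- that \emph{freely equivalent} words, related by insertions as well as deletions, share a reduced form --- without any further argument, since inserting a relator inserts an identity permutation into the composition. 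If you keep the diamond argument as primary, you should add one sentence deducing that invariance under a single insertion follows from your confluence result (if $w'$ is $w$ with a relator inserted, then $w$ is obtained from $w'$ by a deletion, so they reduce to the same word).
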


We use this to claim that freely equivalent words share reduced forms. We will denote that equivalence class of a word $w$ with $[w]$. As well, we can say that if $w \approx \tau$, then $w \lambda \approx \tau \lambda$ for any words $w, \tau,$ and $\lambda$.

It is also true that $W[X]$ satisfies the group axiom of the existence of inverses. For example, the inverse of $w = ba^{-1}c$ is $w^{-1} = c^{-1}ab^{-1}$. We see that $ww^{-1} = ba^{-1}cc^{-1}ab^{-1} \approx 1.$ In general, inverses can be found by reversing the letters and taking the respective inverses of each. We note that $(x^{-1})^{-1}$ is the same as $x$. We can use this to define the following:

\begin{definition}
    The set $F[X] = \{ [w] | w \in W[X]\}$ with the operation defined by $[u][v] = [uv]$ is a group and is called a free group with basis $X$.
\end{definition}

\begin{definition}
    Let $X$ be a subset of a group $F$. Then \textbf{$F$ is a free group with basis $X$} provided the following holds: if $\phi$ is any function from the set $X$ into a group $H$, then there exists a unique extension of $\phi$ to a homomorphism $\phi^*$ from $F$ into $H$ satisfing $\phi^*(x) = \phi(x) \forall x \in X$. \cite{Lyndon2001}
\end{definition}

\begin{fact}
    Any two bases have the same cardinality. So if $B \subseteq F_n$ so that $B$ generates $F_n$ and $|B| = n$, $B$ is a basis.
\end{fact}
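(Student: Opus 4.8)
The plan is to treat the two assertions in turn. For the first, that any two bases of a free group $F$ have the same cardinality, I would test $F$ against the finite group $\mathbb{Z}/2\mathbb{Z}$. Fix a basis $X$ of $F$. By the defining property of a basis, every function $X \to \mathbb{Z}/2\mathbb{Z}$ extends uniquely to a homomorphism $F \to \mathbb{Z}/2\mathbb{Z}$, so the set $\mathrm{Hom}(F,\mathbb{Z}/2\mathbb{Z})$ is in bijection with the set of all functions from $X$ to $\mathbb{Z}/2\mathbb{Z}$; this has $2^{|X|}$ elements when $X$ is finite and is infinite when $X$ is infinite. Since $\mathrm{Hom}(F,\mathbb{Z}/2\mathbb{Z})$ depends only on $F$, any two bases of $F$ are either both infinite or both finite of the same size. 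In particular $F_n$ is unambiguously determined by its rank $n$.

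For the second assertion, let $X = \{x_1,\dots,x_n\}$ be a basis of $F_n$ and $B = \{b_1,\dots,b_n\}$ (the $b_i$ are distinct since $|B| = n$). By the universal property there is a unique homomorphism $\psi \colon F_n \to F_n$ with $\psi(x_i) = b_i$, and $\psi$ is surjective because $B$ generates $F_n$. I would then show $\psi$ is injective; granting this, $\psi$ is an automorphism, and an automorphism sends a basis to a basis — given a function $g\colon B \to H$, solve the extension problem for $X$ using $x_i \mapsto g(b_i)$ and precompose with $\psi^{-1}$ — so $B = \psi(X)$ is a basis. Thus, despite the word ``So'', the statement is not purely formal: it reduces exactly to the claim that a surjective endomorphism of $F_n$ is injective, i.e.\ that $F_n$ is Hopfian.

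To get Hopficity I would combine residual finiteness of $F_n$ with Mal'cev's counting argument. For residual finiteness: given a nontrivial reduced word $w = x_{i_1}^{\varepsilon_1}\cdots x_{i_k}^{\varepsilon_k}$, one builds a homomorphism $F_n \to S_m$ into a symmetric group — assign partial permutations to the $x_j$ so that a base vertex is pushed along the prefixes of $w$ to a new vertex, then complete to genuine permutations — which does not send $w$ to the identity; hence the finite-index normal subgroups of $F_n$ intersect trivially. For Hopficity from this: if $\psi\colon F_n\twoheadrightarrow F_n$ had $1\neq g\in\ker\psi$, choose a finite-index normal $N\trianglelefteq F_n$ with $g\notin N$; then $\alpha\mapsto\alpha\circ\psi$ is an injection of the finite set $\mathrm{Hom}(F_n,F_n/N)$ into itself, hence a bijection, so the quotient map $q\colon F_n\to F_n/N$ factors as $\alpha\circ\psi$; but then $g\in\ker\psi\subseteq\ker q=N$, a contradiction.

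I expect this residual-finiteness / Hopficity step to be the only substantive part; everything else is bookkeeping with the universal property. If one prefers to avoid it, an alternative for the second assertion is Nielsen reduction: carry the $n$-element generating set $B$ by elementary Nielsen transformations to a Nielsen-reduced generating set, which is necessarily a basis and, by the first assertion, still has $n$ elements, forcing $B$ itself to have been a basis; but this imports the full Nielsen--Schreier machinery, so I would present the Hopfian argument as the main line.
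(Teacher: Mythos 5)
Your proposal is correct, and it supplies an actual argument for a statement the paper records as a bare Fact with no proof or citation attached. The most valuable thing you did was to notice that the word ``So'' conceals real content: invariance of rank does not formally yield that an $n$-element generating set of $F_n$ is a basis; one genuinely needs that the surjective endomorphism $\psi$ carrying a basis onto $B$ is injective, i.e.\ that $F_n$ is Hopfian. Your route to that --- residual finiteness via permutation representations, then Mal'cev's counting of the finite set $\mathrm{Hom}(F_n, F_n/N)$ to force the quotient map to factor through $\psi$ --- is the standard one, and each step checks out: precomposition by a surjection is injective on Hom-sets, the Hom-set is finite because $F_n$ is finitely generated and $F_n/N$ is finite, and the resulting factorization puts $\ker\psi$ inside $N$, contradicting the choice of $N$. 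Your bookkeeping that an automorphism carries a basis to a basis (extend $x_i \mapsto g(b_i)$ and precompose with $\psi^{-1}$) is also right. One small caveat: the $|\mathrm{Hom}(F,\mathbb{Z}/2\mathbb{Z})| = 2^{|X|}$ test pins down the rank only when the bases are finite; for two infinite bases of different cardinalities it shows merely that both Hom-sets are infinite, so the first sentence of the Fact, read in full generality, needs an extra observation (e.g.\ that $|F| = |X|$ for infinite $X$). This is harmless in context, since the paper only ever works with $F_n$ for finite $n$. Your alternative via Nielsen reduction would also work, but the Hopfian line is the cleaner one to present given that the paper adopts the universal-property definition of a basis.
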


An example of a set that forms a basis for $F_3$ is the set $X = \{w_1, w_2, w_3\} = \{a, a^{-1}bc, c^{-1}a^{-1}\}.$ Clearly, we can take just $w_1$ and get $a$. If take $w_1^{-1}w_3^{-1}$, we get $a^{-1}ac$, which reduces down to just $c$. Finally, now that we have out $a$ and $c$ elements, we can take $aw_2c^{-1}$ to get $aa^{-1}bcc^{-1}$, or just $b$. Thus, we can get to the set $\{a,b,c\}$, which are our 3 basis elements of $F_3$.

\section{The Whitehead Model}

We will use $M_n = \#^n(S^1 \times S^2)$ to study elements and automorphisms of $F_n$. The model that we will be using was developed by J.H.C. Whitehead in 1936 but is still a relevant way to explore free groups \cite{10.2307/1968618}.

The Seifert-van Kampen theorem, often called just van Kampen’s
theorem, is a result that relates the fundamental group of a space $\mathbf{X}$ that is the union of path-connected subspaces with the fundamental groups of its constituent subspaces, i.e. the subspaces $A$ and $B$ that comprise that union. The fundamental group of a topological space $\mathbf{X}$, denoted $\pi_1(\mathbf{X},x_0)$, is homotopy classes of closed curves with basepoint $x_0$ in $\mathbf{X}$. If $A$ and $B$ are 3-manifolds then $\pi_1(A\#B) = \pi_1(A) * \pi_1(B)$ \cite{Hatcher2001}. If the intersection
of these subspaces is also path-connected then we will be able write $\pi_1(\mathbf{X})$ in terms of the free product of $\pi_1(A)$ and $\pi_1(B)$ with respect to a certain normal
subgroup. For path-connected subspaces $A$ and $B$ of $\mathbf{X}$ whose union is the entirety of $X$ consider the set of points $A \cap B$. The homomorphisms $i_* : \pi_1(A \cap B) \to \pi_1(A)$
and $j_* : \pi_1(A \cap B) \to \pi_1(B)$ are induced by the inclusion maps $A \cap B \overset{i}\hookrightarrow A$ and $A \cap B \overset{j}\hookrightarrow B$, respectively \cite{Hatcher2001}.

\begin{theorem}{\textbf{van-Kampen's Theorem}}
    If a topological space $\mathbf{X}$ is the union of path-connected, ``closed" sets $A$ and $B$ then if $A \cap B$ is path-connected,
    \[ \pi_1(\mathbf{X}) \cong \frac{\pi_1(A)*\pi_1(B)}{\langle \langle i_*(\omega)j_*(\omega)^{-1}|\omega \in \pi_1(A \cap B)\rangle\rangle} \]
    where $\langle \langle i_*(\omega)j_*(\omega)^{-1}|\omega \in \pi_1(A \cap B)\rangle\rangle$ is the smallest normal subgroup containing these elements. \cite{Hatcher2001}
\end{theorem}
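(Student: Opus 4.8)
The plan is to produce an explicit isomorphism rather than to manipulate the quotient abstractly. By the universal property of the free product, the homomorphisms $\pi_1(A) \to \pi_1(\mathbf{X})$ and $\pi_1(B) \to \pi_1(\mathbf{X})$ induced by the inclusions assemble into a single homomorphism $\Phi \colon \pi_1(A) * \pi_1(B) \to \pi_1(\mathbf{X})$, and it suffices to prove that $\Phi$ is surjective with kernel exactly $N := \langle\langle\, i_*(\omega) j_*(\omega)^{-1} \mid \omega \in \pi_1(A \cap B)\,\rangle\rangle$; the first isomorphism theorem then yields the stated isomorphism.

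For surjectivity, I would take an arbitrary loop $\gamma \colon [0,1] \to \mathbf{X}$ based at $x_0$ and pull back the interiors of the cover $\{A, B\}$ to an open cover of the compact metric space $[0,1]$. By the Lebesgue number lemma there is a partition $0 = t_0 < t_1 < \cdots < t_k = 1$ such that $\gamma([t_{i-1}, t_i])$ lies entirely in $A$ or entirely in $B$ for each $i$. Since $A \cap B$ is path-connected, for each $i$ I would choose a path $\eta_i$ inside $A \cap B$ from $x_0$ to $\gamma(t_i)$, taking $\eta_0$ and $\eta_k$ constant. Then $[\gamma]$ equals the product of the classes of the loops $\eta_{i-1} \cdot (\gamma|_{[t_{i-1}, t_i]}) \cdot \overline{\eta_i}$, each of which is a loop in $A$ or in $B$ and hence lies in the image of $\Phi$.

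The inclusion $N \subseteq \ker \Phi$ is immediate: for a loop $\omega$ in $A \cap B$, both $i_*(\omega)$ and $j_*(\omega)$ map to the class of $\omega$ in $\pi_1(\mathbf{X})$, so $\Phi$ kills $i_*(\omega) j_*(\omega)^{-1}$, and $\ker \Phi$ is a normal subgroup. The reverse inclusion $\ker \Phi \subseteq N$ is the heart of the matter and the step I expect to be the main obstacle. Given a word $w = g_1 h_1 g_2 h_2 \cdots$ (alternating loops in $A$ and $B$) whose image under $\Phi$ is trivial, fix a basepoint-preserving homotopy $H \colon [0,1] \times [0,1] \to \mathbf{X}$ from the concatenated loop to the constant loop. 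Applying the Lebesgue number lemma to the square, subdivide $[0,1] \times [0,1]$ into a grid of small rectangles, each mapped by $H$ into $A$ or into $B$, arranging that the subdivision of the bottom edge refines the one coming from the factorization of $w$. Reading off the \emph{word} along a path in the grid and inserting auxiliary paths in $A \cap B$ at each grid vertex exactly as in the surjectivity argument, one checks that pushing the path across one rectangle at a time changes the current word only by a relation valid in $\pi_1(A)$ or $\pi_1(B)$, or by a conjugate of a generator $i_*(\omega) j_*(\omega)^{-1}$ of $N$. Passing from the bottom edge of the square, which spells $w$, to the top edge, which spells the empty word, then shows $w \in N$.

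The genuinely delicate part is this last homotopy-square argument: one must choose the grid fine enough to refine both the given factorization of $w$ and the cover pulled back by $H$, keep the auxiliary basepoint paths mutually consistent across adjacent rectangles, and verify carefully that each elementary move is a product of conjugates of the $N$-generators. Everything else — the construction of $\Phi$, surjectivity, and the easy inclusion $N \subseteq \ker \Phi$ — is a routine application of compactness, path-connectedness of $A \cap B$, and the universal property of the free product.
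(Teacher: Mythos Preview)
The paper does not prove this theorem at all; it is stated with a citation to Hatcher and then used as a black box. Your outline is the standard argument (essentially Hatcher's), and it is correct in its broad strokes: build $\Phi$ from the universal property of the free product, get surjectivity by subdividing a loop via a Lebesgue-number argument and reconnecting pieces through $A\cap B$, and get $\ker\Phi \subseteq N$ by subdividing a nullhomotopy square and pushing across one cell at a time.

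One small caution worth flagging: your surjectivity and kernel arguments both pull back the cover $\{A,B\}$ to an \emph{open} cover of $[0,1]$ (or $[0,1]^2$) before invoking the Lebesgue number lemma, which implicitly assumes $A$ and $B$ are open in $\mathbf{X}$. The paper's statement says ``closed'' (in scare quotes), which is nonstandard; the usual hypothesis is openness, and your proof needs it. If the sets were genuinely closed with no openness assumption, the Lebesgue-number step could fail. This is a quirk of the paper's phrasing rather than a defect in your argument, but you should state the openness hypothesis explicitly.
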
 
\bigskip

We use the following definition of a collar (or bicollar) to help us define the embedding of our spheres in $M_n$:
\begin{definition}\cite{Rolfsen1976}
    A subset $X \subset Y$ is said to be \textbf{bicollared} (in $Y$) if there exists an embedding $b: X \times [-1,1] \to Y$ such that $b(x,0) = x$ when $x \in X$. The map $b$, or its image, is the \textbf{bicollar} itself.
\end{definition}

We define a \textbf{Collared Sphere Basis} in $M_n$ as an $n$-tuple of 2 spheres along with a labeling and orientation that are mutually disjoint and whose complement is connected. The ``negative" side of our collaring is the negative labeling of our sphere and the positive collar will be the positive labeling of our spheres. We denote this $(S_1, S_2, \ldots, S_n)$ so that \begin{enumerate}
    \item Each $S_i$ is homeomorphic to a 2-sphere.
    \item $S_i \cap S_j = \emptyset$ for $i \neq j$.
    \item Each $S_i$ has a bicollar $B_i = S^2 \times [-1,1]$ and all $B_i$ are disjoint and do not disconnect.
    \item $M_n - \bigcup\limits_{i=1}^{n}S_i$ is connected.
\end{enumerate}

We can then formulate our \textbf{standard sphere basis} as the $n$ spheres that are identified by the boundaries of the removed 3-balls in the construction $M_n = \#^n(S^1 \times S^2)$, henceforth denoted $\mathfrak{G}$.\\

\section{Diagrams}
When we draw a diagram, we will draw the $n$ standard spheres as $2n$ spheres where each pair is identified with each other. This is equivalent to being bicollared because we are taking each $S_i$ with its bicollar and splitting it into its negative side and positive side.
\begin{center}
    \includegraphics[width=8.0cm]{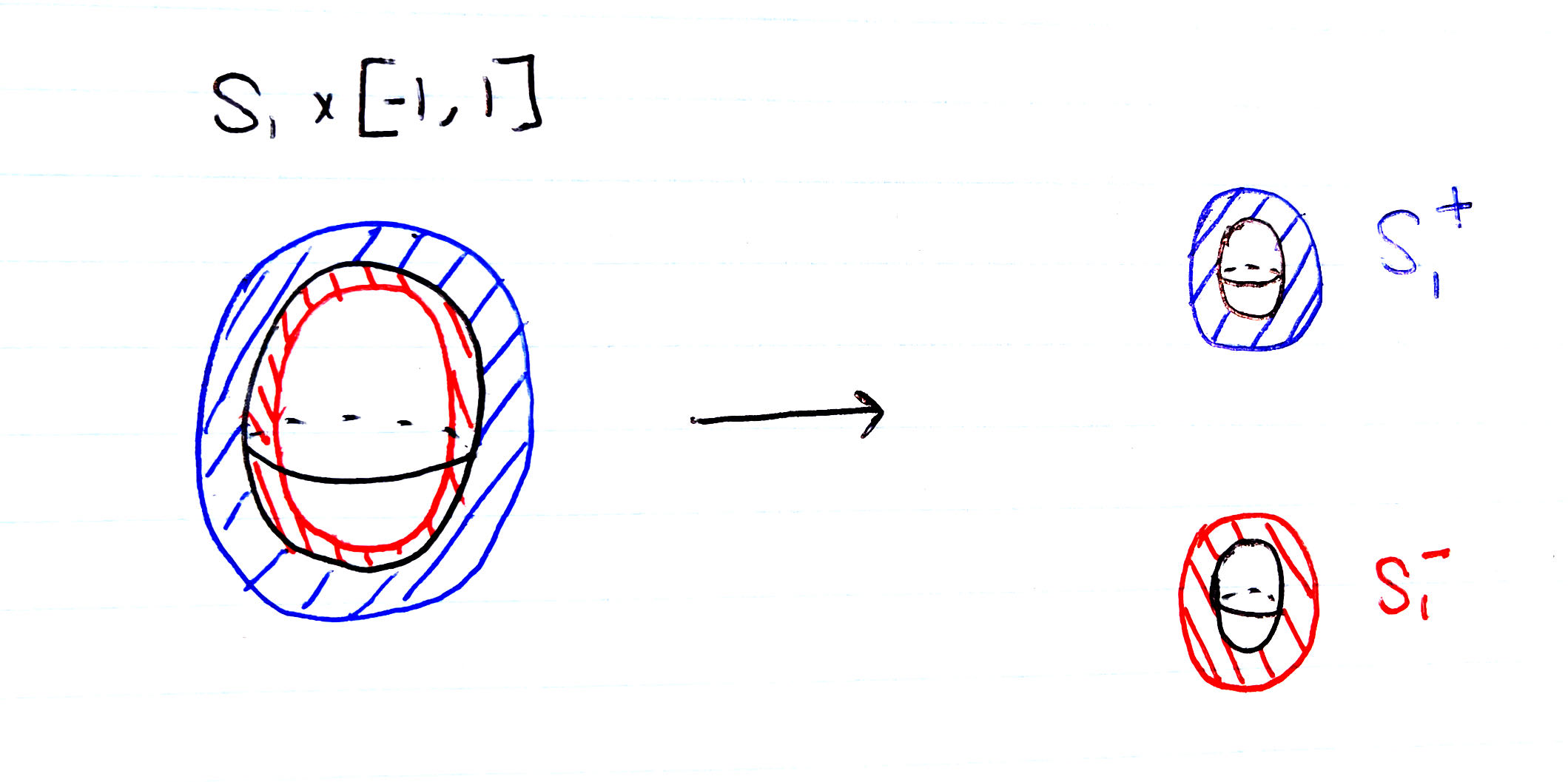}\\
    \end{center}
    We see an example here of a Sphere $S_1$ and its bicollar $B_1$, with the red reprsenting $S_1 \times [-1,0]$ and blue representing $S_1 \times (0,1]$. We can then draw our picture of $S_1$ with each label corresponding to the negative and positive side of the bicollar. 
The following helps us to understand why the Whitehead model is used to model free group automorphisms.
\begin{enumerate}
\item A self-homeomorphism of $M_n$ does two things:
\begin{enumerate}
    \item Takes the Standard Sphere Basis to another Collared Sphere basis
    \item Induces an automorphism of $F_n$ using the $\pi_1$ functor.
\end{enumerate}
\item Moreover, given any sphere basis, there is a self-automorphism of $M_n$ taking the standard sphere basis to it (preserving orientation and labeling).
\item Now we know that given any automorphism of $F_n$, we can draw a Collared Sphere Basis that induces this automorphism when we trace a path as defined later in this section.
\end{enumerate}

We will now define the space in which we call $\widetilde{M}_n$ as  $S^3 - (\bigcup\limits_{i=1}^{n}\mathring{B}_i^-) - (\bigcup\limits_{i=1}^{n}\mathring{B}_i^+)$. There exists an identification map $\widetilde{M}_n \to M_n$ and we will use $\widetilde{M}_n$ as we define our automorphisms of $F_n$.

Given a sphere basis, to find the the representing automorphism, we set a fixed point $x^*$ (usually chosen away from the spheres) and draw the closed, oriented curves that pass through the standard sphere basis for each pair of sphere $a, b, \ldots, n$ ($n$ comes from $F_n$, not the alphabet letter $n$). The corresponding spheres (with orientation) that we pass through to achieve this gives us our mapping. Without loss of generality, we always draw the standard sphere basis from left to right with the positive side above the negative side. Two collared sphere bases which are equivalent under isotopies induce the same automorphism.

We see in the example below that in order to pass through the standard sphere $a$, we must pass through the red $a$ sphere in the positive direction. Thus $a \to a$. To pass through the standard sphere $b$, we enter the red $b$ sphere in the positive direction, then exit the red $a$ sphere in the positive direction. Thus $b \to ab$ and our automorphism of $F_2$ is fully defined.

\begin{center}
    \includegraphics[width=8.0cm]{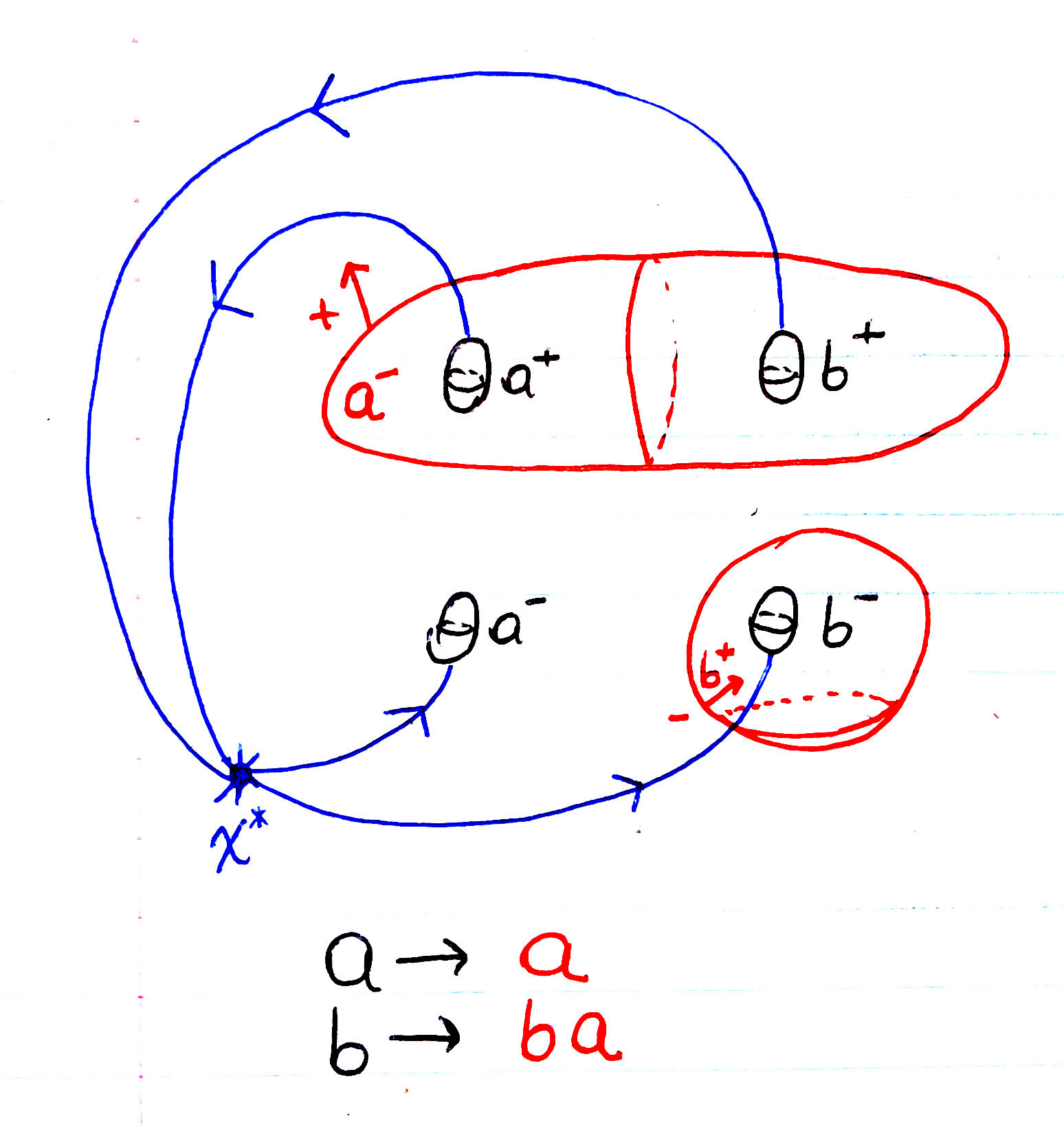}\\
    Figure 1: An automorphism of $F_2$.
\end{center}

It is worth noting that these homeomorphisms of $M_n$ are not unique to an automorphism. By way of isotopy, we can move the spheres around and define the same automorphism but with a different amount of intersections with the standard sphere basis. All of the submanifolds will remain properly embedded and all intersections are transverse. 

\section{Degrees}

We will define now what we mean when we say the \textbf{degree} of spheres and automorphisms. We need to discuss what we mean by \textit{components} of spheres to count intersections and define our degree. We will denote our nonstandard spheres as $\mathfrak{R}$ and $\mathfrak{T}$.

\begin{definition}
    In $\widetilde{M}_n$, given a sphere $A \in \mathfrak{R}$, a \textbf{component} of $A$ is either $S^2$, an \textbf{end-cap}, which intersects the standard sphere basis once, or a \textbf{tunnel} (not necessarily a cylinder), which intersects the standard sphere basis more than once.
\end{definition}

\begin{center}
    \includegraphics[width=9.0cm]{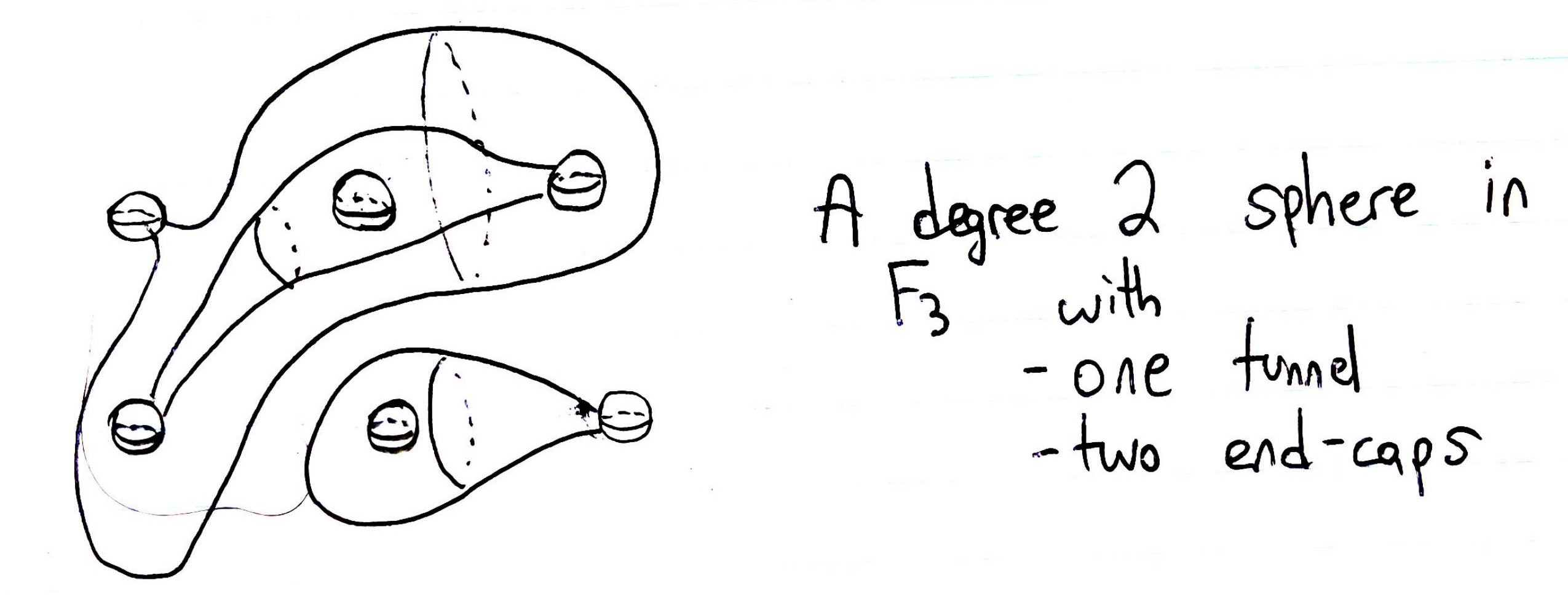}\\
    Figure: An example of the fact above with a degree 2 sphere and its 3 components.
\end{center}

\begin{fact}{\label{Sphere}}
    A sphere $A$ in $\widetilde{M}_m$ with degree 0 is just $S^2$.
\end{fact}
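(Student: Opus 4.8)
The plan is to reduce the statement to its definition and then finish with a standard innermost-disk argument. Recall that $\deg(A)$ is determined entirely by how the sphere $A \in \mathfrak{R}$ meets the standard sphere basis $\mathfrak{G}$, with $\deg(A)=0$ saying that this intersection pattern is as simple as it can be. The first step is therefore to reinterpret the conclusion: showing that $A$ ``is just $S^2$'' amounts to showing that, in the position in which $A$ realizes its degree, $A \cap \mathfrak{G} = \emptyset$. Granting this, $A$ is a connected closed surface homeomorphic to $S^2$ contained in the complement of $\mathfrak{G}$; cutting $\widetilde{M}_m$ along $\mathfrak{G}$ therefore leaves $A$ untouched, so $A$ is a single component with no boundary circles --- not an end-cap, which has one, and not a tunnel, which has at least two --- i.e. a component of the $S^2$ type. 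That is exactly the claim.

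So the substantive step is to establish $A \cap \mathfrak{G} = \emptyset$, which I would do by contradiction. If $A$ meets $\mathfrak{G}$, then $A \cap \mathfrak{G}$ is a nonempty finite collection of disjoint circles on $A \cong S^2$. Pick a circle $c \subseteq A \cap S_i$ that is innermost, either on $A$ or on $S_i$, so that it bounds a disk $D$ on one of the two spheres whose interior is disjoint from the rest of the intersection; on the $A$ side such a $D$ is an end-cap, and on the $S_i$ side it exists because $S_i$ is a $2$-sphere. Since $\widetilde{M}_m \subseteq S^3$ and $S^3$ is irreducible, the embedded $2$-sphere obtained by gluing $D$ to the matching disk on the other side bounds a $3$-ball, and --- checking that this ball avoids all the removed balls $B_j^{\pm}$ --- that ball lies in $\widetilde{M}_m$. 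Isotoping $A$ across this ball and slightly past $S_i$ deletes $c$ (and any circles nested inside it) from $A \cap \mathfrak{G}$ without creating new intersection circles, so the number of such circles strictly decreases. Iterating finitely many times drives it to $0$; but $\deg(A)=0$ already means this count cannot be decreased, so $A \cap \mathfrak{G}$ was empty from the outset.

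I expect the main obstacle to be precisely this reduction step, specifically the verification that the $3$-ball furnished by the irreducibility of $S^3$ does not swallow any of the punctures $B_j^{\pm}$, so that the supporting isotopy genuinely stays inside $\widetilde{M}_m$, together with the check that $A$ remains properly and transversally embedded after each slide so the induction can proceed. These are the usual normal-form considerations for sphere systems in $\#^n(S^1 \times S^2)$; with them in hand, everything else follows immediately from the definitions of ``component'' and ``degree''.
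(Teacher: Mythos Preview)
Your first paragraph is the whole proof, and in fact the paper treats this statement as an unproved Fact precisely because it is immediate from the definitions. The extra machinery in your second and third paragraphs stems from a misreading of how the paper defines the degree of a \emph{sphere}: it sets $\deg(A)=\sum_i \#(A,S_i)$ with no minimization. (Only the degree of an \emph{automorphism} is defined as a minimum.) Hence $\deg(A)=0$ literally says that the total number of intersection circles of $A$ with the standard sphere basis is zero, i.e.\ $A\cap\mathfrak{G}=\emptyset$ on the nose. Passing to $\widetilde{M}_m$ then leaves $A$ uncut, so it is a single boundaryless component --- the $S^2$ case --- exactly as you argue in your first paragraph.

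Consequently your innermost-disk argument is superfluous here, and the sentence ``$\deg(A)=0$ already means this count cannot be decreased'' is not what the definition says. The argument you sketch is the right tool for a different (and harder) statement, namely that a sphere isotopic to one disjoint from $\mathfrak{G}$ can itself be isotoped off $\mathfrak{G}$; but that is not what is being asserted. For the Fact as stated, one line suffices: $\deg(A)=0 \Rightarrow A\cap\mathfrak{G}=\emptyset \Rightarrow$ $A$ is a single $S^2$ component in $\widetilde{M}_m$.
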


\begin{fact}
        A sphere $A$ in $\widetilde{M}_m$ with degree 1 has 2 end-caps.
\end{fact}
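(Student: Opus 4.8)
The plan is to cut $A$ along the standard sphere basis $\mathfrak{G}$, observe that the resulting pieces are planar subsurfaces of the $2$-sphere $A$, and then pin down their number and type with an Euler-characteristic count. First I would record the general picture: since all intersections are transverse, $A \cap \mathfrak{G}$ is a disjoint union of, say, $c$ circles, and cutting $A$ along them produces exactly $c+1$ pieces --- one more than the number of circles, because every embedded circle on $S^2$ is separating (equivalently, the dual graph of the decomposition, with a vertex per piece and an edge per circle, is a tree with $c+1$ vertices and $c$ edges). A piece meeting $\mathfrak{G}$ in a single circle is an end-cap (a disk), one meeting $\mathfrak{G}$ in $\ge 2$ circles is a tunnel, and the remaining possibility ($c = 0$, a single piece equal to all of $A$) is exactly the degree-$0$ case handled by Fact \ref{Sphere}.

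Next I would unwind the definition of degree: for a degree-$1$ sphere this says $A$ is divided into exactly two components, so $c = 1$ and $A$ is the union of two pieces $C_1, C_2$ glued along the lone circle $A \cap \mathfrak{G}$. Each $C_i$ then has exactly one boundary circle, hence (being a planar subsurface of $S^2$ with one boundary component) is a disk, i.e.\ an end-cap; so $A$ has exactly two end-caps and no tunnels, which is the claim. As an independent cross-check I would run the Euler-characteristic bookkeeping: $\chi$ is additive across a cut along circles, a planar surface with $b$ boundary circles has $\chi = 2-b$, and the total number of boundary circles among all pieces is $2c$; with $c = 1$ this gives $\chi(C_1) + \chi(C_2) = \chi(S^2) = 2$ and $b_1 + b_2 = 2$ with each $b_i \ge 1$, forcing $b_1 = b_2 = 1$ and hence $\chi(C_i) = 1$, which again says both pieces are disks and that no tunnel can appear.

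This is a short argument and most of it is routine surface bookkeeping that I would state rather than belabor. The one point deserving care --- and what I expect to be the only real obstacle --- is the bridge between the abstract definition of degree and this concrete decomposition: I need $A$ to be presented in the appropriate reduced (normal) position with respect to $\mathfrak{G}$, so that $A \cap \mathfrak{G}$ really is a disjoint union of circles whose count is the degree, with no inessential circles inflating it. Once that setup is in place, the separating-circle property of $S^2$ (the complement of an open disk in $S^2$ is again a disk) immediately forces both pieces to be end-caps, and the few edge cases are trivial.
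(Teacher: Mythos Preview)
Your argument is correct. In the paper this statement is presented as a bare \emph{Fact} with no proof supplied, so there is no argument of the paper's to compare against; the separating-circle / Euler-characteristic bookkeeping you outline is exactly the standard justification one would give if asked to fill in the details, and it matches the pattern the paper clearly has in mind (degree $k$ gives $k+1$ components, illustrated for $k=0,1,2$).

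One small remark: your closing caveat about needing $A$ in ``reduced (normal) position'' is unnecessary here. In this paper the degree of a \emph{sphere} is defined directly as $\sum_i \#(A,S_i)$, i.e.\ the raw count of intersection circles with $\mathfrak{G}$ --- no minimum is taken. (The minimum only enters later, in the definition of the degree of an \emph{automorphism}.) So ``$\deg(A)=1$'' already says $c=1$ on the nose, and your argument runs without any normalization hypothesis.
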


\begin{fact}
    A sphere $A$ in $\widetilde{M}_m$ with degree 2 has 2 end-caps and 1 tunnel.
\end{fact}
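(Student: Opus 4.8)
\section*{Proof proposal for the final Fact}

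The plan is to reduce the statement to an elementary fact about disjoint circles on a $2$-sphere. Since $A$ is homeomorphic to $S^2$ and the members of the standard sphere basis $\mathfrak{G}$ are closed surfaces, after placing everything in transverse position the intersection $A \cap \mathfrak{G}$ is a disjoint union of embedded circles $\gamma_1,\ldots,\gamma_k\subset A$, and the components of $A$ in the sense of the Definition above are exactly the closures of the connected regions into which these circles cut $A$. By the definition of degree (and consistently with the two preceding facts, which are the cases $k=0$ and $k=1$), a sphere of degree $2$ has $k=2$, equivalently has three components. So it suffices to show that two disjoint simple closed curves on $S^2$ cut it into three pieces, two of which are disks (end-caps) and one of which is a planar surface with exactly two boundary circles (a tunnel).

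I would then set up the standard bookkeeping. By induction on $k$, using the Jordan curve theorem for $S^2$ — a circle disjoint from the others lies in a single existing region and, being separating, splits it into exactly two — one sees that $k$ disjoint circles cut $S^2$ into exactly $k+1$ regions. Form the adjacency graph $\Gamma$ whose vertices are the regions and whose edges are the circles $\gamma_i$, the edge $\gamma_i$ joining the two regions it bounds; then $\Gamma$ is connected (any path in $S^2$ between two regions meets the circles finitely often, giving a walk in $\Gamma$) and has $k+1$ vertices and $k$ edges, hence is a tree. Under this correspondence a region meeting $\mathfrak{G}$ in a single circle is a disk, by the Schoenflies theorem for $S^2$ — i.e.\ an end-cap — and is a leaf of $\Gamma$; a region meeting $\mathfrak{G}$ in two or more circles is a tunnel, and is a vertex of valence $\ge 2$. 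Specializing to $k=2$: $\Gamma$ is a tree with three vertices and two edges, and the only such tree is the path, which has exactly two leaves and one internal vertex. Translating back, $A$ has exactly two end-caps and exactly one tunnel — and in fact that tunnel, being a planar surface with two boundary circles, is an annulus.

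I do not expect a serious obstacle; the content is light once the set-up is fixed. The two points that need genuine care are (i) pinning down that a degree-$2$ sphere meets $\mathfrak{G}$ in exactly two circles, which is precisely where the (here omitted) definition of degree must be invoked, and (ii) the appeal to Jordan–Schoenflies on $S^2$ to conclude that a piece bounded by a single circle is a disk rather than some other planar surface. With those settled, the heart of the argument is the one-line observation that the tree on three vertices is unique; the same count recovers the degree-$0$ and degree-$1$ facts as the cases $k=0$ and $k=1$.
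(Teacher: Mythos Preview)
Your argument is correct and cleanly organized: two disjoint circles on $S^2$ give three complementary regions, the adjacency graph is the unique tree on three vertices, and its two leaves and one internal vertex translate to two end-caps and one tunnel (in fact an annulus). There is nothing to compare against, however, because the paper does not prove this statement at all --- it is listed as a \emph{Fact} alongside the degree-$0$ and degree-$1$ cases and is supported only by an illustrative figure of a degree-$2$ sphere cut into three pieces. So your write-up supplies strictly more than the paper does. One small remark: your point~(i), that a degree-$2$ sphere meets $\mathfrak{G}$ in exactly two circles, is immediate from the paper's definition $\deg(A)=\sum_i \#(A,S_i)$, since each $\#(A,S_i)$ already counts intersection circles; there is no hidden subtlety there. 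Your tree framework also recovers the companion facts for degrees $0$ and $1$ and would extend to higher degrees, which the paper's picture-based treatment does not.
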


 Let $S_1$ and $S_2$ be spheres in $M_n$. Define $\#(S_1, S_2)$ as the number of components of $S_1 \cap S_2$. Given a sphere $A \in \mathfrak{R}$, define deg$(A) = \sum_{i=1}^n\#(A, S_i)$ where $S_i$ are the spheres of our standard sphere basis.\\
 Now, let $\mathscr{S}_1$ and $\mathscr{S}_2$ be sphere bases. Define \[\#(\mathscr{S}_1, \mathscr{S}_2) = \sum_{S_i \in \mathscr{S}_1} \sum_{S_j \in \mathscr{S}_2} 
 \#(S_i, S_j).\]

 Now we can say for an automorphism $f$, \[\text{deg}(f) = \text{min}\{\#(\mathscr{S}_0, \mathscr{S}_1) | \mathscr{S}_1 \ \text{represents} \ f\}.\]

\section{Composition of Automorphisms}

As a reminder, all submanifolds are properly embedded and all intersections are transverse. We will name our automorphisms $f_{A,B}$ where A is the input Collared Sphere Basis and B is the output Collared Sphere Basis. In order to compose two automorphisms $f$ and $g$ where $f = f_{\mathfrak{G},\mathfrak{T}}$ and $g = g_{\mathfrak{G},\mathfrak{R}}$, we must take the $g^{-1} = g_{\mathfrak{R},\mathfrak{G}}$. This allows us to find a new automorphism $f \circ g = h_{\mathfrak{R},\mathfrak{T}}.$

To find the inverse of an automoprhism, $g$ for example, we use the same base point as we used when we traced the automorphism. Now, instead of passing in the positive direction through the standard sphere basis and writing down which spheres we pass through, we will almost do the opposite. Now, we will trace our closed curves that pass through each sphere in the positive direction and write down how they pass through the standard spheres. We must note that we must only cross through a nonstandard sphere once and must not pass through any other nonstandard sphere. When we draw a new picture, we take the new input sphere basis and make this our new standard sphere basis. Our output spheres can be drawn with our new stanard sphere basis and the corresponding automorphism will remain the same. An example of finding an inverse and using it to compose can be see below:

\begin{center}
    \includegraphics[width=9.0cm]{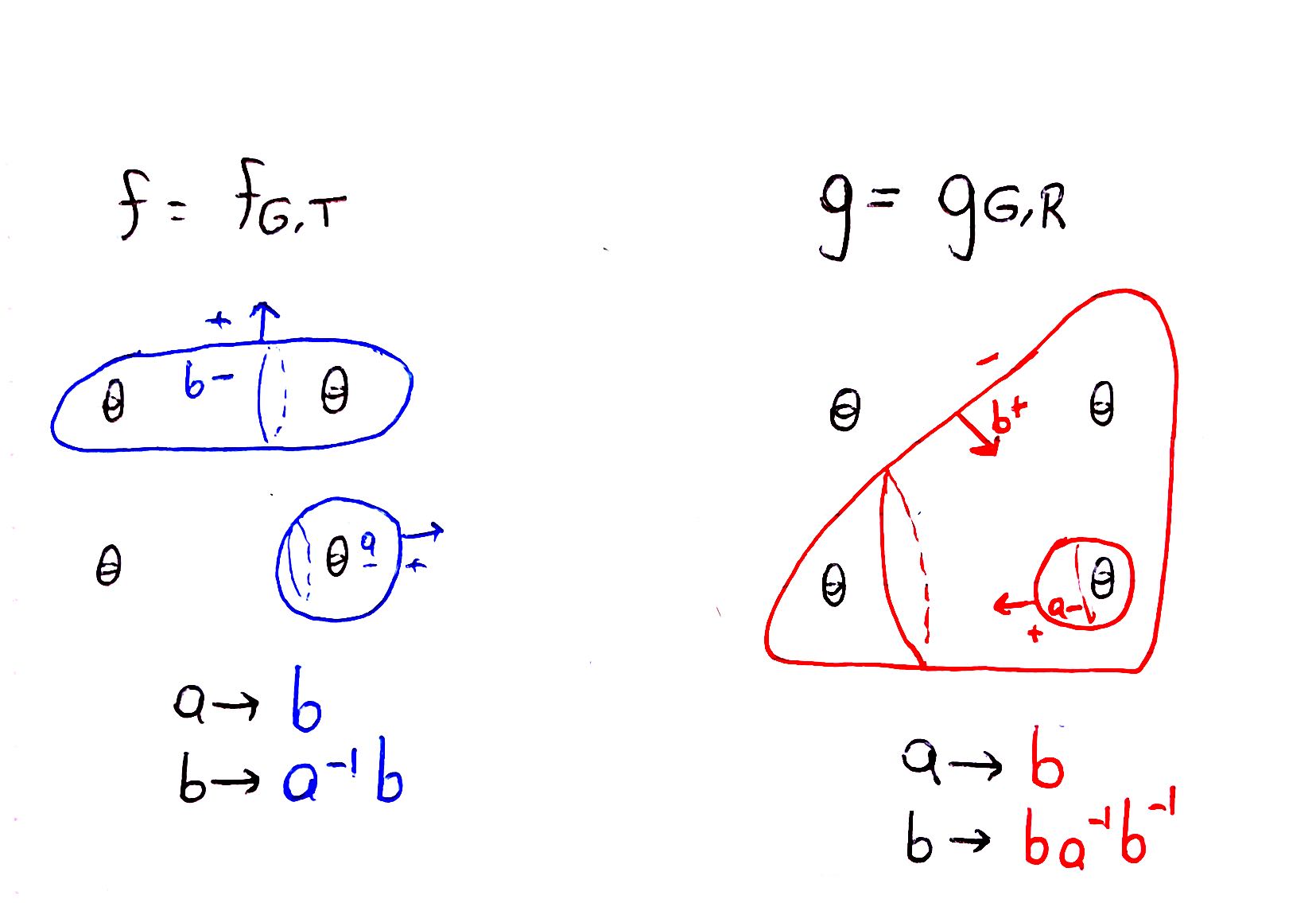}\\
        Figure 2: Two Automorphisms of $F_2$ we are going to compose.\\
\end{center}
    \includegraphics[width=9.0cm]{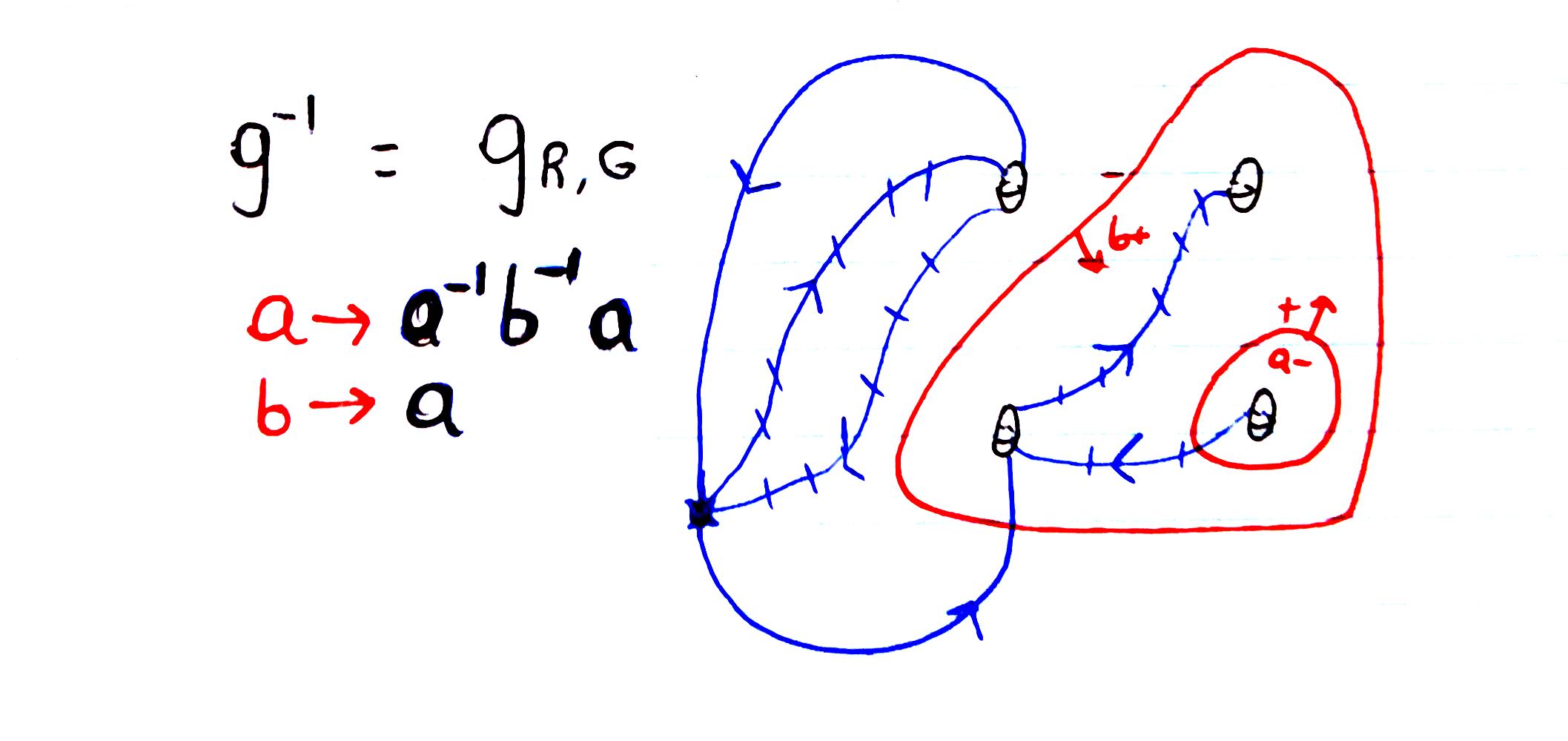}\\
     In order to compose $f$ and $g$, we must take the inverse of $g$. To find this automorphism, we pass through the $\mathfrak{R}$ spheres with positive orientation and trace which standard spheres we passed through. Our slashed-blue path passes through the red $a$ sphere in the positive direction and passes through the standard spheres in the order of $a^{-1}b^{-1}a$. Likewise, our smooth-blue path crosses through the red $b$ sphere and only crosses though the standard $a$ sphere in the positive direction. \\
\begin{center}
     \includegraphics[width=9.0cm]{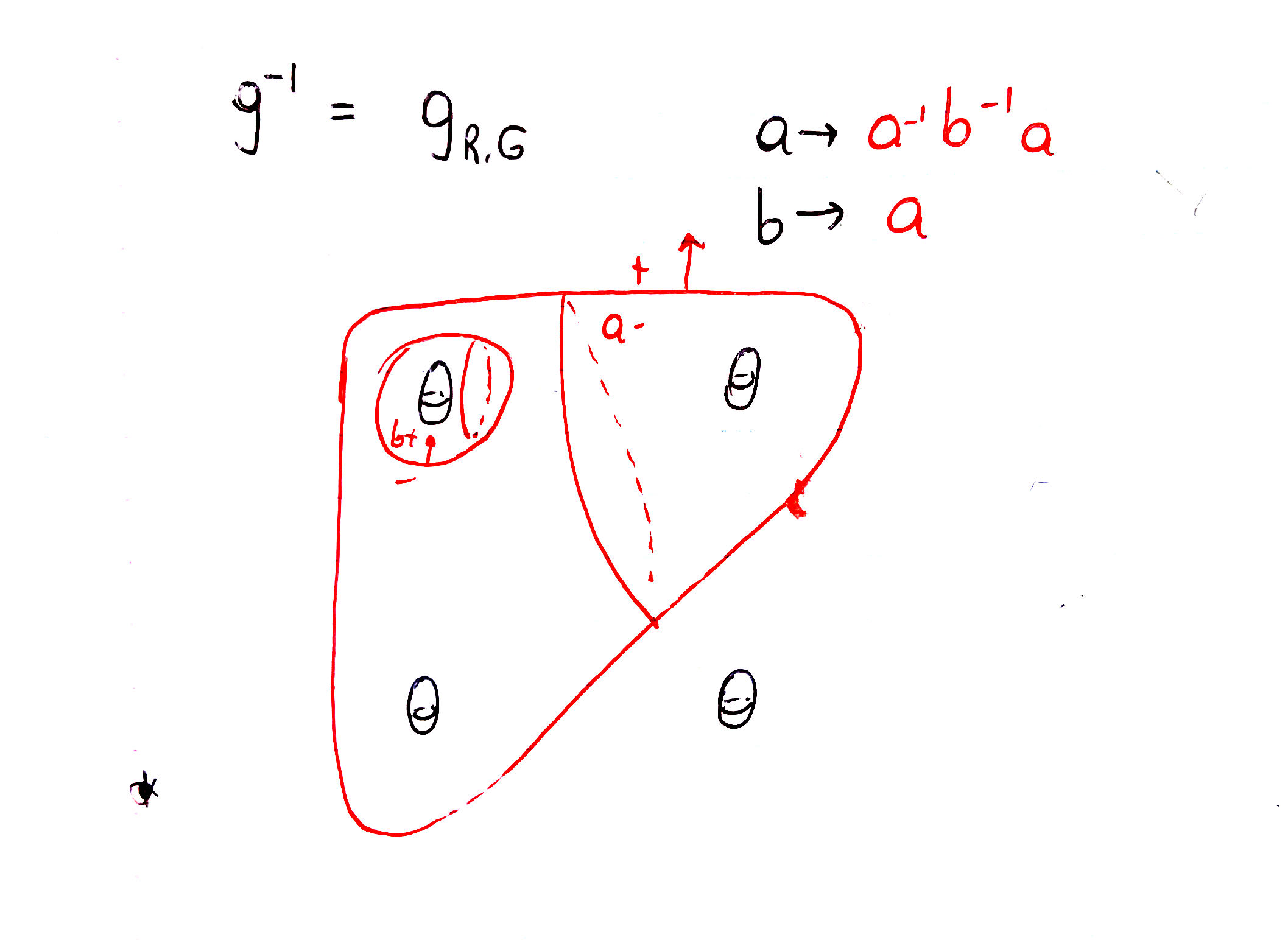}\\
\end{center}
     We can redraw a homeomorphism that describes the new automorphism for $g^{-1}$ that we will use for our composition. Notice that $\mathfrak{R}$ has become our new standard sphere basis.\\
\begin{center}
      \includegraphics[width=9.0cm]{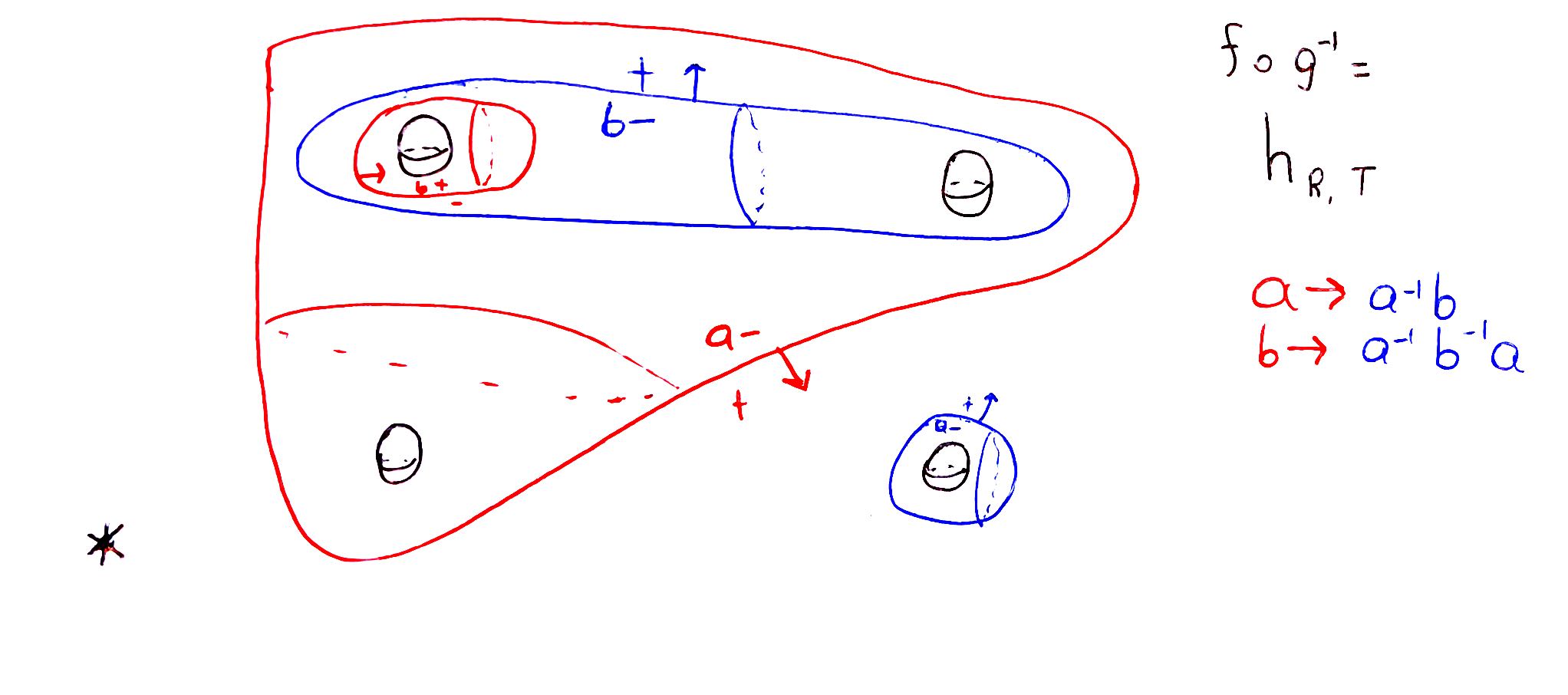}\\
\end{center}
    We draw our two homeomorphisms on top of each other and now we are able to trace our new automorphism $h = h_{\mathfrak{R},\mathfrak{T}}$. We notice that there are no intersections between the two sphere bases so deg$(h) = 0$.\\

\begin{center}
    \includegraphics[width=9.0cm]{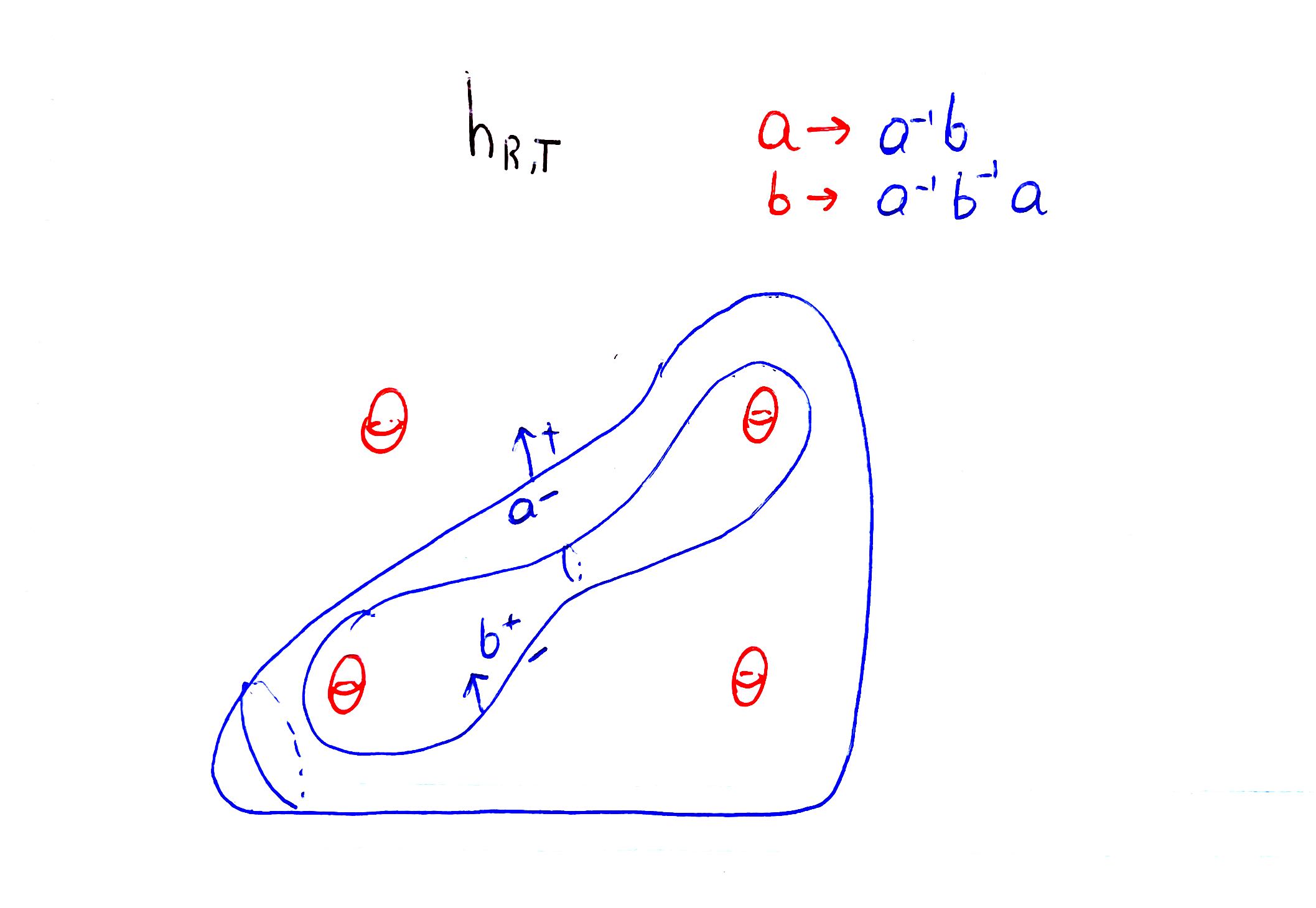}\\
    \end{center}
    Now that we have described our new automorphism $h$, we can draw a corresponding homeomorphism of $M_2$ that we will have that deg$(f \circ g) = 0.$\\

We can use this information to come up with the following conjecture about the degree of the composition of two automorphisms in $F_n$:

\begin{conjecture}{\label{Conj}}
    If $f$ and $g$ are automorphisms of $F_n$, then\\
    deg$(f \circ g) \leq (n + \text{deg}(f))(n+\text{deg}(g))$.
\end{conjecture}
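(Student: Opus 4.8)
\bigskip
\noindent\textbf{Proof strategy.}
The plan is to reduce Conjecture~\ref{Conj} to a ``triangle inequality through $\mathfrak{G}$'' for the pairing $\#$, and then estimate that quantity by counting components. Write $f = f_{\mathfrak{G},\mathfrak{T}}$ and $g = g_{\mathfrak{G},\mathfrak{R}}$, and choose the output bases so that $\#(\mathfrak{G},\mathfrak{T}) = \deg(f)$ and $\#(\mathfrak{G},\mathfrak{R}) = \deg(g)$, which is possible by the definition of degree. By the composition procedure above, $f\circ g$ is represented by $h_{\mathfrak{R},\mathfrak{T}}$: applying a self-homeomorphism of $M_n$ carrying $\mathfrak{R}$ to $\mathfrak{G}$ turns $\mathfrak{T}$ into a sphere basis representing $f\circ g$ whose intersection count with $\mathfrak{G}$ equals $\#(\mathfrak{R},\mathfrak{T})$. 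Hence, after isotoping the pair $\mathfrak{R},\mathfrak{T}$ into general position,
\[ \deg(f\circ g) \;\le\; \#(\mathfrak{R},\mathfrak{T}), \]
and it suffices to prove $\#(\mathfrak{R},\mathfrak{T}) \le (n+\deg f)(n+\deg g)$.

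\bigskip
\noindent\textbf{Counting the components.}
Next I would cut $M_n$ along the standard basis $\mathfrak{G}$, which yields the single complementary region $\widetilde{M}_n$. Generalizing the Fact that a degree-$0$ sphere is just $S^2$ together with its degree-$1$ and degree-$2$ analogues --- a $2$-sphere cut along $d$ disjoint circles falls into $d+1$ pieces --- the basis $\mathfrak{R}$ decomposes into exactly $n + \deg(g)$ components (end-caps, tunnels, and, when some $R_i$ misses $\mathfrak{G}$, whole spheres), and likewise $\mathfrak{T}$ decomposes into exactly $n + \deg(f)$ components. Each circle of $\mathfrak{R}\cap\mathfrak{T}$ can, after a generic perturbation, be taken to miss $\mathfrak{G}$, so it lies in a unique $\mathfrak{R}$-component $X$ and a unique $\mathfrak{T}$-component $Y$; this gives
\[ \#(\mathfrak{R},\mathfrak{T}) \;=\; \sum_{X}\ \sum_{Y}\ \#(X,Y). \]
The conjecture now follows once one can isotope, rel $\mathfrak{G}$, so that $\#(X,Y)\le 1$ for every such pair. (In the degree-$0$ case this bookkeeping is especially clean, since then $\mathfrak{R}$ can be taken disjoint from $\mathfrak{G}$, so there are no triple intersections and the $\mathfrak{R}$-components are simply the $n$ spheres $R_i$.)

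\bigskip
\noindent\textbf{The reduction move and the main obstacle.}
To force $\#(X,Y)\le 1$ I would run the standard innermost-disk surgery: if $X$ and $Y$ meet in a circle inessential on one of them, or in more than one circle, take a circle innermost on one surface, cap it with the subdisk it bounds there, and isotope the other surface across the resulting $2$-sphere, strictly lowering $\#(\mathfrak{R},\mathfrak{T})$; then iterate. The hard part is that the complement of $\mathfrak{G}$ is not irreducible once $n\ge 2$, so the capping $2$-sphere need not bound a ball; to push the surgery through one must use that $\mathfrak{R}$ and $\mathfrak{T}$ are genuine sphere bases (their complements are connected) and argue one handle of $M_n$ at a time, and the tunnel-against-tunnel case is where I expect the real work --- and the residual gap toward the full Conjecture~\ref{Conj} --- to lie. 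The degree-$0$ case, which is the theorem actually established here, sidesteps this: if $\deg(g)=0$ then by the Fact on degree-$0$ spheres each $R_i$ is an embedded $2$-sphere disjoint from $\mathfrak{G}$ and contributes a single component, leaving only the surgery of an embedded sphere against a planar surface --- for which the reductions go through --- and yielding $\deg(f\circ g)\le n\,(n+\deg f)$, precisely the bound of Conjecture~\ref{Conj} when $\deg(g)=0$.
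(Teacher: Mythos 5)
The statement you are addressing is stated in the paper as a conjecture, not a theorem: the paper never proves it in general, only the special cases $\deg(f)=\deg(g)=0$ (Muller) and $\deg(f)=0$, $\deg(g)=k$ (the main theorem). Your proposal mirrors the paper's actual strategy for those special cases quite faithfully --- reduce $\deg(f\circ g)$ to $\#(\mathfrak{R},\mathfrak{T})$, count $n+\deg(g)$ components of $\mathfrak{R}$ and $n+\deg(f)$ components of $\mathfrak{T}$, and argue that each pair of components meets at most once --- and to your credit you explicitly flag that the last step is only established when one basis is disjoint from $\mathfrak{G}$. So this is a strategy with an honestly acknowledged hole, not a proof, and the hole sits exactly where the paper's Theorem \ref{IntThm} stops: its innermost-disk/bicollar argument relies on the intersection circle bounding a disk on a degree-$0$ sphere, and no analogue is given (here or in the paper) for a circle lying on a tunnel of $\mathfrak{R}$ meeting a tunnel of $\mathfrak{T}$, where the circle need not bound a disk on either component and the capping sphere need not bound a ball in $M_n$.

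One further point in your general-case bookkeeping is actually wrong rather than merely incomplete: you assert that each circle of $\mathfrak{R}\cap\mathfrak{T}$ "can, after a generic perturbation, be taken to miss $\mathfrak{G}$." When both $\mathfrak{R}$ and $\mathfrak{T}$ meet $\mathfrak{G}$, three surfaces in general position in a $3$-manifold have isolated triple points, and these are stable under perturbation; a circle of $\mathfrak{R}\cap\mathfrak{T}$ through such a point cannot be pushed off $\mathfrak{G}$. This does not destroy the inequality (a circle cut into arcs by $\mathfrak{G}$ contributes at least as much to $\sum_X\sum_Y\#(X,Y)$ as to $\#(\mathfrak{R},\mathfrak{T})$, so the component-pair count still bounds the circle count from above), but the identity you wrote should be an inequality, and the presence of arcs rather than circles is precisely what makes the tunnel-against-tunnel reduction hard. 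In short: your degree-$0$ specialization recovers the bound $n(n+\deg f)$ that the paper actually proves, but the general conjecture remains open both in the paper and in your proposal.
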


\section{0 Degree Automorphisms}

\indent In his honor's thesis, Muller came up with a bound for the composition of two 0-degree automorphisms of $F_n$. His theorem is as follows:

\begin{theorem}[\cite{Muller2014}]\label{MullerThm}
    If $f$ and $g$ are 0-degree automorphisms of $F_n$, then the degree of $f \circ g$ is at most $n^2$.
\end{theorem}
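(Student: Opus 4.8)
\noindent\emph{Proof strategy.} The plan is to move everything into the sphere picture, realize $f\circ g$ by the composed pair of sphere bases from Section~5, and then bound intersection numbers by studying disjoint sphere systems in a punctured $3$-sphere. First, since $\text{deg}(f)=0$ there is a collared sphere basis $\mathfrak{T}$ representing $f$ with $\#(\mathfrak{G},\mathfrak{T})=0$, and since $\text{deg}(g)=0$ there is a collared sphere basis $\mathfrak{R}$ representing $g$ with $\#(\mathfrak{G},\mathfrak{R})=0$. By the composition construction of Section~5, $f\circ g$ is represented by $h_{\mathfrak{R},\mathfrak{T}}$, and (after translating back to $\mathfrak{G}$ as the standard basis, which does not change the count) we get
\[
\text{deg}(f\circ g)\le \#(\mathfrak{R},\mathfrak{T}).
\]
So it suffices to isotope $\mathfrak{R}$ and $\mathfrak{T}$ — through collared sphere bases still representing $g$ and $f$ — so that $\#(\mathfrak{R},\mathfrak{T})\le n^2$.

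Next I would cut $M_n$ along the $n$ standard spheres to obtain $\widetilde{M}_n\cong S^3$ with $2n$ open balls removed, a simply connected $3$-manifold whose boundary consists of the $2n$ spheres $S_1^{\pm},\dots,S_n^{\pm}$. Because $\mathfrak{R}$ and $\mathfrak{T}$ are disjoint from $\mathfrak{G}$, both become systems of $n$ pairwise disjoint spheres inside $\widetilde{M}_n$. Each such sphere separates $S^3$, and, since $\mathfrak{R}$ and $\mathfrak{T}$ are bases, none of these spheres bounds a ball in $\widetilde{M}_n$; hence each $R_i$ and each $T_j$ determines a partition of the $2n$ boundary spheres into two nonempty sets, and the $R_i$ being pairwise disjoint means their partitions form a laminar family (likewise for the $T_j$).

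The crux is then the claim that one can isotope so that every $R_i$ meets every $T_j$ in at most one circle. Granting this, an intersection of two transverse spheres is a disjoint union of circles, so $\#(R_i,T_j)\le 1$ for all $i,j$, and therefore
\[
\#(\mathfrak{R},\mathfrak{T})=\sum_{i=1}^{n}\sum_{j=1}^{n}\#(R_i,T_j)\le n^{2},
\]
which is exactly the bound we want. To prove the claim I would establish and use three facts about essential spheres in the punctured $3$-sphere $\widetilde{M}_n$: (a) an essential sphere is determined up to isotopy by the partition it induces on the boundary spheres; (b) a pairwise disjoint family of essential spheres is determined up to ambient isotopy by the induced laminar family of partitions; and (c) given any two partitions $\mathscr{A}$ and $\mathscr{B}$, they can be realized by spheres meeting in a single circle — take two spheres in $S^3$ meeting in one great circle, which cut $S^3$ into four regions, and place the $k$-th removed ball in the region corresponding to whether $k$ lies in $\mathscr{A}\cap\mathscr{B}$, $\mathscr{A}\setminus\mathscr{B}$, $\mathscr{B}\setminus\mathscr{A}$, or neither. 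Combining (a)--(c): standardize $\mathfrak{R}$, cut $\widetilde{M}_n$ along it into its $n+1$ complementary punctured $3$-spheres, and bring each $T_j$ into minimal position with this fixed $\mathfrak{R}$; (a) and (c) applied piece by piece give at most one circle of $R_i\cap T_j$, and since the $T_j$ are pairwise disjoint they can be normalized simultaneously.

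The main obstacle is the entire Step~3, and specifically the tension between minimizing intersections and keeping each sphere in its own isotopy class (so that $\mathfrak{R}$ still represents $g$ and $\mathfrak{T}$ still represents $f$). The naive innermost-disk ``cap-swap'' surgery that removes an intersection circle is only legal when the ball it sweeps across contains no removed ball; when both candidate balls are occupied one gets stuck at two or more circles, and escaping that configuration genuinely requires the rigidity statements (a) and (b) rather than a local move. Proving (a) and (b) by an Alexander's-theorem / innermost-disk argument for spheres in a punctured $S^3$, verifying (c), and checking that the $T_j$ can be normalized against $\mathfrak{R}$ all at once, is the technical heart; once that is in place the counting step above is immediate, and it in fact yields the slightly sharper statement that laminar pairs of partitions contribute nothing.
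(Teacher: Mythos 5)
Your proposal is correct in outline, and its skeleton matches the one this paper (and Muller's thesis, from which the theorem is imported without proof here) actually uses: represent $f$ and $g$ by collared sphere bases $\mathfrak{T}$ and $\mathfrak{R}$ disjoint from $\mathfrak{G}$, observe that $\deg(f\circ g)\le\#(\mathfrak{R},\mathfrak{T})$, reduce everything to the claim that each $R_i$ can be made to meet each $T_j$ in at most one circle, and count $n\times n$. Where you genuinely diverge is in how you justify the one-circle-per-pair claim. The paper's route (Theorem \ref{IntThm}, which is what its later proof of the $n(n+k)$ bound leans on) is a direct geometric surgery: if a component meets a sphere twice, tube it around through the bicollar of that sphere to cancel an intersection, and iterate. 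Your route instead classifies essential spheres in the $2n$-holed $S^3$ by the partitions they induce on the boundary spheres, notes that disjoint systems give laminar families, and realizes any two partitions by spheres meeting in at most one great circle. Your approach buys two things the paper's does not: it connects the problem to standard rigidity results for sphere systems (Laudenbach/Hatcher normal form), and it yields the sharper refinement that nested (laminar) pairs of partitions contribute zero --- which is exactly the combinatorial reason, discussed after Theorem \ref{MullerThm}, that no pair of degree-zero automorphisms of $F_2$ actually achieves degree $4$. The paper's surgery argument, by contrast, adapts more directly to the case $\deg(g)=k>0$, where the objects being pushed around are end-caps and tunnels rather than whole spheres; that is why the paper phrases its key lemma in terms of components.

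One caution: the step you correctly flag as the technical heart --- simultaneously normalizing the whole system $\mathfrak{T}$ against the fixed system $\mathfrak{R}$ so that every pair realizes its pairwise minimum, while each $T_j$ stays in its isotopy class and the $T_j$ stay pairwise disjoint --- is not a formality, and the naive innermost-disk swap can indeed get stuck when both complementary balls contain boundary spheres. This is precisely where you must invoke (and prove, or carefully cite) the determination of disjoint essential sphere systems by their laminar partition data; without that, facts (a) and (c) alone only bound the intersection of each pair in isolation, not of the two systems in a common position. The paper's own proof of Theorem \ref{IntThm} has the mirror-image obligation (checking that the tube move preserves the isotopy class of the sphere system, not merely its Euler characteristic), so neither route escapes this work; yours at least names the right tool for it.
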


This verifies Conjecture \ref{Conj} in the case where deg$(f) = $ deg$(g) = 0$.
Muller noted in his paper that although the bound for $f \circ g \leq
 4$, he states there is no pair of degree zero automorphisms of $F_2$ whose composition is degree four \cite{Muller2014}. We see an example below where we have deg$(f) = \text{deg}(g) = 0$ and when we take $f \circ g$, there are 4 intersections. 

\begin{center}
    \includegraphics[width=10.0cm]{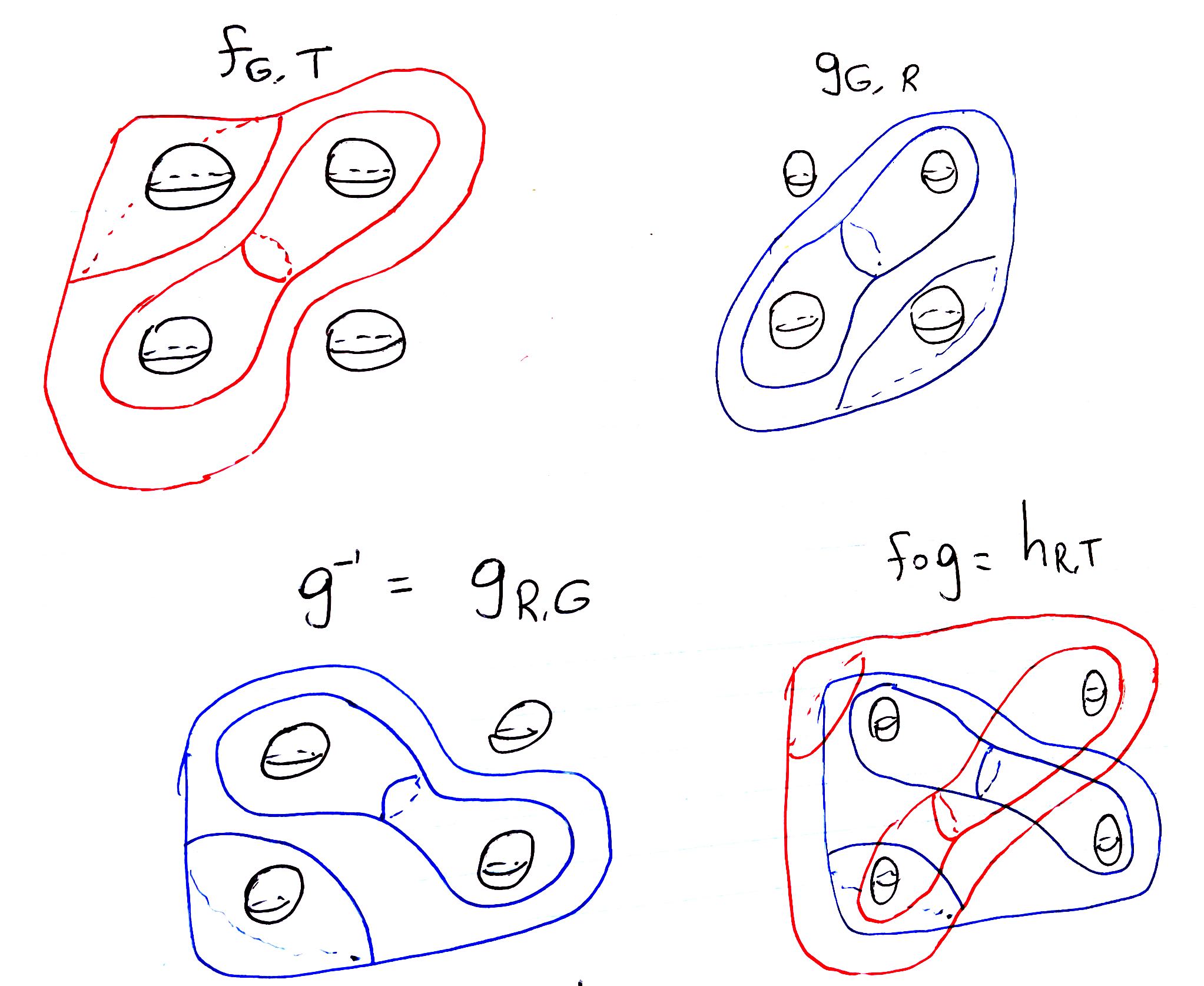}\\
\end{center}

It is not clear that Muller knew of this example but there are different possible labelings and orientations for this figure above. I worked through all of the possibilities and drew corresponding homeomorphisms using the methods above. Muller is correct is his statement that although the bound is 4,  there do not exist labeling and orientations for 4 intersections that lead to a degree 4 automorphism.

\section{Higher Degree Automorphisms}

In order for us to count the number of intersections, we will make use of the number of components each sphere has. The key result which allows us to attain our bound uses the following theorem that describes how we can remove intersections while retaining the same homeomorphism by way of isotopy. By working in the bicollar of the sphere, we are not changing anything topologically while still removing an intersection, helping us to find a bound for our degree.

\begin{theorem}{\label{IntThm}}
    Let $f_{\mathfrak{G}.\mathfrak{T}}$ be a homeomorphism of $M_n$ that maps $\mathfrak{T}$ to $\mathfrak{G}$ such that $\mathfrak{G} \cap \mathfrak{T} = \emptyset.$ and let $g_{\mathfrak{G}.\mathfrak{R}}$ be an homeomorphism of $F_n$ that maps $\mathfrak{R}$ to $\mathfrak{G}$ with deg$(g)=0$. Then, each component of each $\mathfrak{R}_i \in \mathfrak{R}$ will intersect with another component of each sphere $\mathfrak{T}_i \in \mathfrak{T}$ at most once in $\widetilde{M}_n$.
\end{theorem}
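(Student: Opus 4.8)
The plan is to argue by contradiction, exploiting the fact that $\deg(g) = 0$ forces every $\mathfrak{R}_i$ to be a plain $S^2$ by Fact \ref{Sphere}. So suppose some component of some $\mathfrak{R}_i$ meets some component of some $\mathfrak{T}_j$ in two (or more) arcs/circles in $\widetilde{M}_n$. Since $\mathfrak{R}_i$ is an embedded sphere disjoint from $\mathfrak{G}$, it lies entirely in one complementary region of $\mathfrak{G}$, and together with a sub-disk of $\mathfrak{T}_j$ it bounds a region I can push across. The strategy is to use an innermost-circle / outermost-arc argument on $\mathfrak{R}_i \cong S^2$: among all the curves of $\mathfrak{R}_i \cap \mathfrak{T}_j$, pick one that is innermost on $\mathfrak{R}_i$, bounding a disk $D \subseteq \mathfrak{R}_i$ whose interior misses $\mathfrak{T}_j$.

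First I would set up the local picture in the bicollar $B_j = S^2 \times [-1,1]$ of the standard sphere $\mathfrak{G}_j$ that $\mathfrak{T}_j$ is "parallel to" near the intersection, reducing the situation to two properly embedded surfaces in a product region, so that transversality and the Jordan curve theorem on $\mathfrak{R}_i$ apply cleanly. Second, I would take the innermost disk $D$ on $\mathfrak{R}_i$ bounded by an intersection curve $c$; its boundary $c$ also bounds a disk $D'$ on $\mathfrak{T}_j$ (using that $\mathfrak{T}_j$ is a sphere, so any embedded circle on it bounds on both sides — choose the side giving the relevant component). Third, I would do the surgery/isotopy: replace $D'$ by a push-off of $D$ across the bicollar. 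Because we are working inside the bicollar of a standard sphere, this isotopy does not change the homeomorphism type of the configuration nor the induced automorphism (this is exactly the "working in the bicollar" remark preceding the theorem), but it strictly reduces $\#(\mathfrak{R}_i, \mathfrak{T}_j)$ by eliminating the curve $c$. Iterating, I drive the count between the chosen pair of components down; the claim "at most once" then follows by checking that a single remaining transverse intersection between a component of $\mathfrak{R}_i$ and a component of $\mathfrak{T}_j$ cannot be removed this way (there is no innermost disk) but also cannot be supplemented by a second one without creating a removable bigon.

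The main obstacle I expect is the bookkeeping of \emph{which} disk on $\mathfrak{T}_j$ to use and making sure the surgery does not create new intersections with other spheres $\mathfrak{T}_k$ or $\mathfrak{R}_k$, or with $\mathfrak{G}$ itself — i.e., that the isotopy can be carried out supported in a neighborhood of $D \cup D'$ that stays inside the relevant bicollar and misses everything else. This requires using the disjointness hypotheses ($\mathfrak{G} \cap \mathfrak{T} = \emptyset$, the $\mathfrak{R}_i$ mutually disjoint, the $\mathfrak{T}_j$ mutually disjoint) together with the fact that each $\mathfrak{R}_i$ is an unknotted $S^2$ sitting in a single complementary component. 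A secondary subtlety is handling circle-components versus arc-components of $\mathfrak{R}_i \cap \mathfrak{T}_j$ uniformly; I would treat circles by the innermost-disk argument above and arcs (which end on $\mathfrak{G}$) by an analogous outermost-arc argument, cutting off a half-disk in the bicollar. Once the reduction move is shown to be legitimate and strictly decreasing, the theorem follows by a minimality/descent argument on $\#(\mathfrak{R}_i,\mathfrak{T}_j)$ summed appropriately.
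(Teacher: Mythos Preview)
Your overall plan---reduce excess intersections by a local move and iterate---matches the paper in spirit, but the implementation diverges and introduces a real gap.

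The paper does \emph{not} run an innermost-disk argument on $\mathfrak{R}_i$. It works instead in the bicollar of the degree-$0$ sphere $\mathfrak{T}_i$ (which, since $\mathfrak{G}\cap\mathfrak{T}=\emptyset$, is an honest embedded $S^2$ sitting in $\widetilde{M}_n$). If a component of $\mathfrak{R}_j$ meets $\mathfrak{T}_i$ in two circles, the paper pipes one intersection to the other \emph{inside that bicollar}: cut the $\mathfrak{R}_j$-piece at one circle, run it around within $\mathfrak{T}_i\times[-1,1]$, and reattach near the other circle. The Euler-characteristic lemma inside the proof records that this swap of an annulus for two discs (and vice versa) keeps the component a sphere. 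The move is supported in the product region $S^2\times[-1,1]$, so no global ball-bounding hypothesis is needed, and it is $\mathfrak{R}_j$ that gets modified, not $\mathfrak{T}_i$.

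Your version has two concrete problems. First, you place the argument in the bicollar of a \emph{standard} sphere $\mathfrak{G}_j$ ``that $\mathfrak{T}_j$ is parallel to.'' There is no such parallelism: $\mathfrak{T}_j$ is merely disjoint from $\mathfrak{G}$, not isotopic into a collar of any $\mathfrak{G}_j$, so the local model you set up is simply not available. Second, your key step ``replace $D'$ by a push-off of $D$'' is only an isotopy of $\mathfrak{T}_j$ if the sphere $D\cup D'$ bounds a $3$-ball. In $\widetilde{M}_n$, a punctured $S^3$ with $2n$ boundary spheres, this is not automatic: the region between $D$ and $D'$ may contain some of the $\mathfrak{G}_k^{\pm}$, in which case your replacement is a genuine surgery that can change the isotopy class of $\mathfrak{T}_j$ and hence the automorphism it represents. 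You would need an additional argument (e.g.\ choosing the correct side for $D'$, or taking the innermost curve on $\mathfrak{T}_j$ rather than on $\mathfrak{R}_i$) to guarantee the ball. The paper's piping move sidesteps this entirely by never leaving the product bicollar of $\mathfrak{T}_i$. As a minor point, your discussion of arc-components is moot: since $\mathfrak{T}_i$ is closed and disjoint from $\mathfrak{G}$, every transverse intersection with a component of $\mathfrak{R}_j$ is a circle.
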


\begin{proof}
    Without loss of generality, let deg$(g)=0$. Since $\mathfrak{G} \cap \mathfrak{T} = \emptyset.$, each $\mathfrak{T}_i$ is a properly embedded sphere in $M_n$. Assume a component of $\mathfrak{R}_j$ intersects a sphere in $\mathfrak{T}$ more than once. For sake of argument, we will go through the proof of two intersections, but this process can be repeated recursively until there is one intersection remaining.

    The component of $\mathfrak{R}_j$ will also be properly embedded in $M_n$ and each intersection with $\mathfrak{T}$ will be a circle that does not contain anything else since $\mathfrak{T}_i$ is a sphere. We know that a circle bounds a disc on $\mathfrak{T}_i$ since our degree is 0, so this makes finding our bicollar easier.

    \begin{lemma}
        Working in this bicollar will not change the Euler Characteristic of the component we are working in since we are removing an annulus and 2 discs, but connecting the removed discs with an annulus and ``patching up'' our removed annulus with 2 discs.
    \end{lemma}

    Take one of the intersections between $\mathfrak{T}_i$ and the component of $\mathfrak{R}_j$ and collar the degree 0 sphere where the intersection occurs. We can then continue working in the collar of $\mathfrak{T}_i$ and move the $\mathfrak{R}_j$ component ``outside'' of the sphere. When we reattach this tube to itself where the other intersection occurs, we remain in the collar and have removed our initial intersection. In our illustrations below, we outline this procedure.

    Any curve that passed through our initial two intersections can now miss the sphere entirely and any curve that passed through one intersection only will continue to only pass through one intersection, although it may be different from the original.
\end{proof}

\begin{center}
\includegraphics[width=9cm]{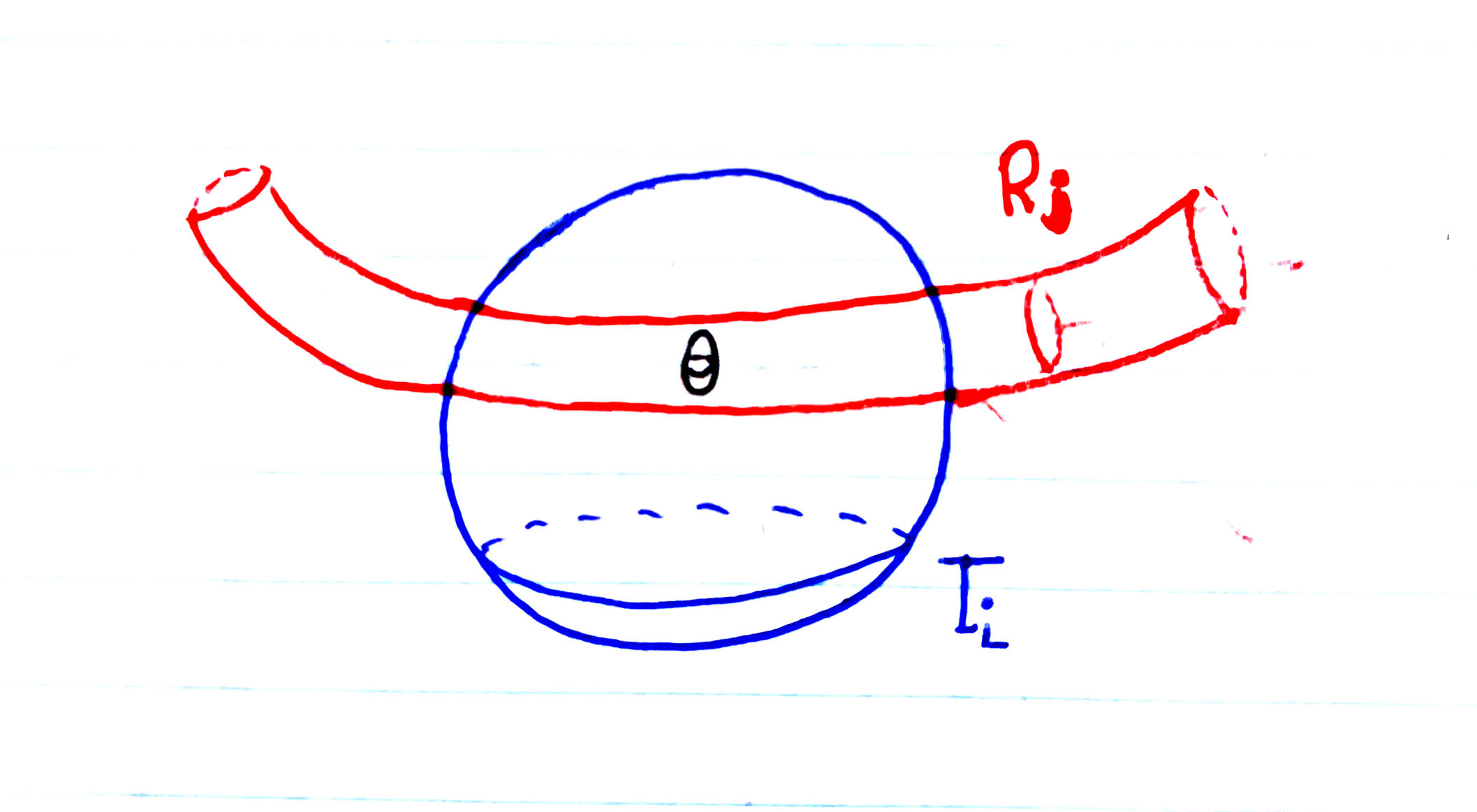}\\
\end{center}
Here we can see a component of $\mathfrak{R}$ intersecting a sphere in $\mathfrak{T}$ in two different spots in the $\widetilde{M}_n$ space. In our illustration, we have drawn the component of $\mathfrak{R}$ as a cylinder.
\begin{center}
    \includegraphics[width=9cm]{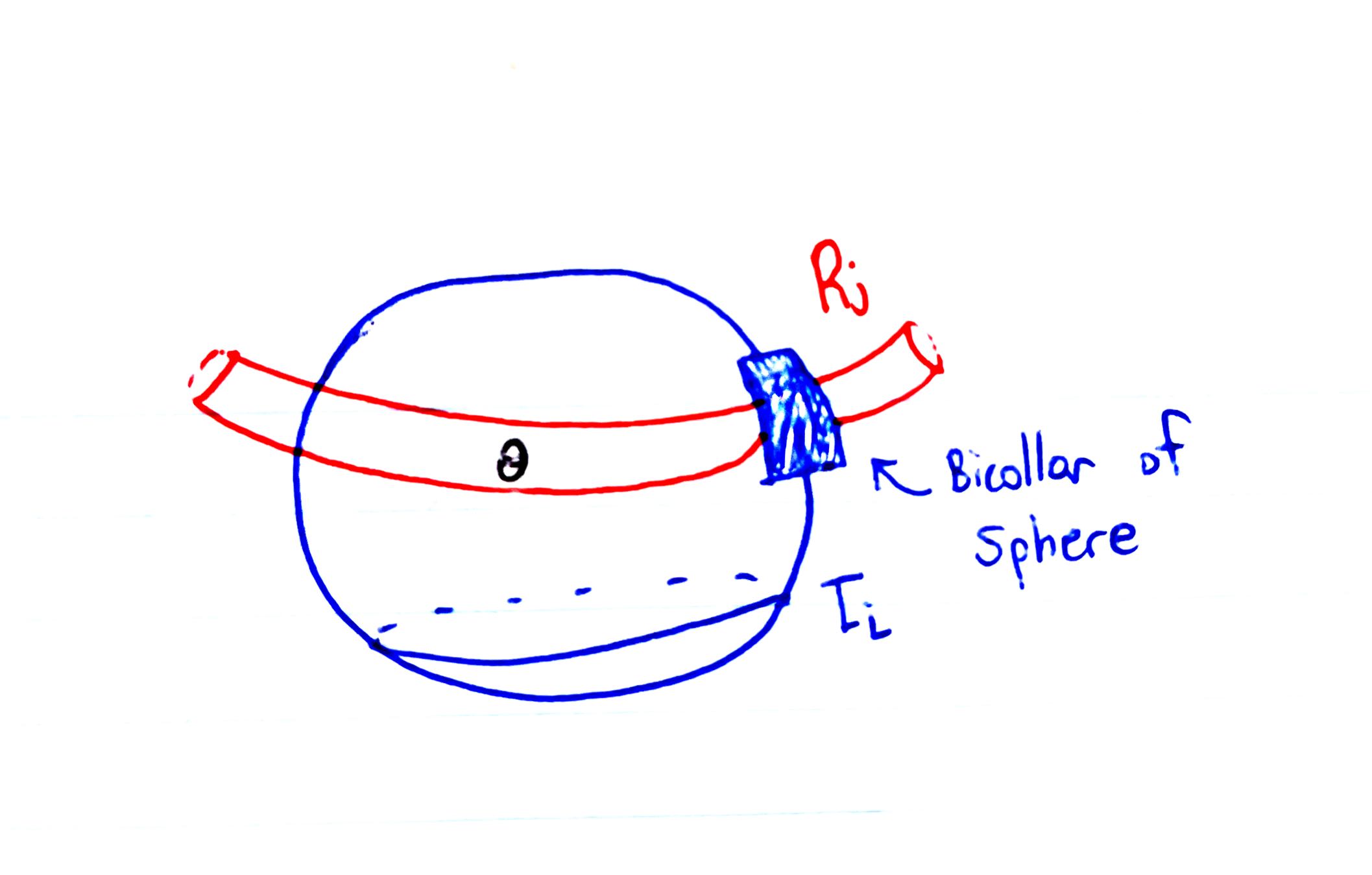}\\
\end{center}
In this figure we are working in the bicollar of the sphere to start working to remove the intersection on the right side.
\begin{center}
    \includegraphics[width=9cm]{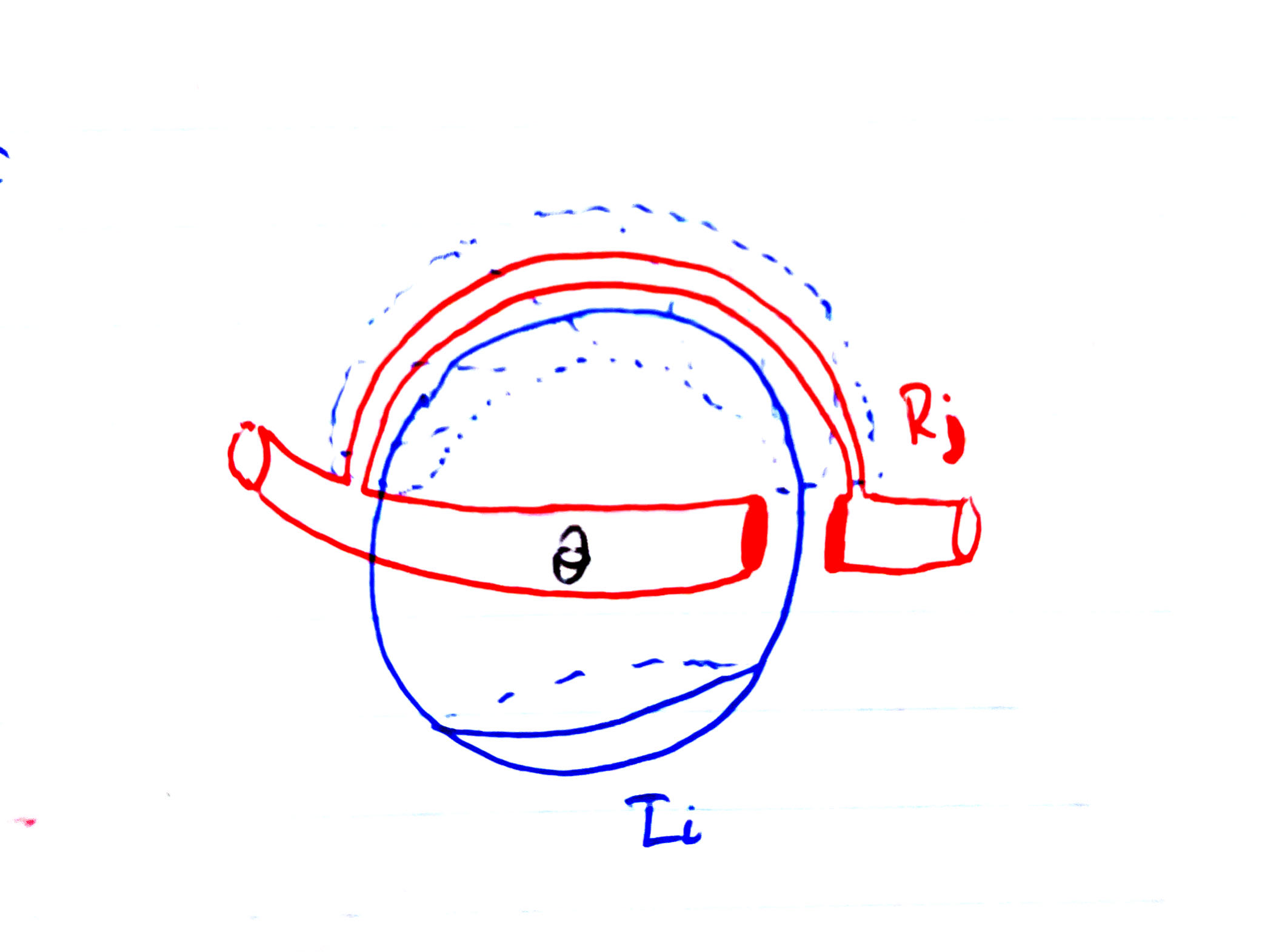}\\
\end{center}
We work around the entire bicollar of the sphere, wrapping around to the left intersection. Inside the collar, we remove the intersection on the right, tube it above the sphere, and reattach it on the outside of the left of the sphere, thus removing the intersection on the right.

\begin{theorem}
    If $f$ and $g$ are automorphisms of $F_n$ with deg$(f)=0$ and deg$(g) = k$, then the degree of $f \circ g$ is at most $n(k+n)$.
\end{theorem}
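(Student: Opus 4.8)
The plan is to realize $f$ and $g$ by especially economical pictures, lay them on top of one another, and bound the resulting intersection number between the two output sphere bases using Theorem \ref{IntThm}. Write $f = f_{\mathfrak{G},\mathfrak{T}}$ and $g = g_{\mathfrak{G},\mathfrak{R}}$, so that $f\circ g = h_{\mathfrak{R},\mathfrak{T}}$ with $\mathfrak{R}$ serving as the standard sphere basis for $h$; then $\text{deg}(f\circ g)=\text{deg}(h)\le \#(\mathfrak{R},\mathfrak{T})$ for whatever representatives we draw. I would draw $f$ so that $\mathfrak{T}$ realizes $\text{deg}(f)=0$, i.e.\ $\mathfrak{T}\cap\mathfrak{G}=\emptyset$; then, by the Fact that a degree-$0$ sphere is just $S^2$, each $\mathfrak{T}_j$ is a single embedded $2$-sphere in $\widetilde M_n$ with exactly one component. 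I would draw $g$ so that $\mathfrak{R}$ realizes $\text{deg}(g)=k$, so that $\sum_{i=1}^{n}\text{deg}(\mathfrak{R}_i)=\#(\mathfrak{G},\mathfrak{R})=k$. Just as in the earlier construction of $f\circ g$, the two pictures are overlaid after redrawing $g^{-1}$ with $\mathfrak{R}$ as the standard basis, and the surgeries used inside Theorem \ref{IntThm} all happen in a bicollar of $\mathfrak{T}$ disjoint from $\mathfrak{G}$, so they change neither $\mathfrak{T}$ nor the count $\sum_i\text{deg}(\mathfrak{R}_i)=k$.

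Next I would pin down how many components a sphere of a given degree has. Generalizing the Facts that degree $0,1,2$ spheres have $1,2,3$ components, a sphere $A$ of degree $d$ in $\widetilde M_n$ has exactly $d+1$ components: its $d$ intersection circles with the standard basis are pairwise disjoint simple closed curves on the $2$-sphere $A$, and cutting $S^2$ along $d$ disjoint simple closed curves always produces $d+1$ pieces. In particular each $\mathfrak{R}_i$ breaks into $\text{deg}(\mathfrak{R}_i)+1$ components.

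Now I would invoke Theorem \ref{IntThm}: with our hypothesis $\text{deg}(f)=0$ furnishing the condition $\mathfrak{G}\cap\mathfrak{T}=\emptyset$, it yields that each component of each $\mathfrak{R}_i$ meets each $\mathfrak{T}_j$ in at most one circle. Each circle of $\mathfrak{R}_i\cap\mathfrak{T}_j$ lies on $\mathfrak{T}_j$, hence is disjoint from $\mathfrak{G}$, hence sits inside a single component of $\mathfrak{R}_i$; so the circles of $\mathfrak{R}_i\cap\mathfrak{T}_j$ are distributed among the $\text{deg}(\mathfrak{R}_i)+1$ components of $\mathfrak{R}_i$ with at most one apiece, giving $\#(\mathfrak{R}_i,\mathfrak{T}_j)\le \text{deg}(\mathfrak{R}_i)+1$. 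Since $\#(\mathfrak{R},\mathfrak{T})=\sum_i\sum_j\#(\mathfrak{R}_i,\mathfrak{T}_j)$ by definition, summing over the $n^2$ pairs $(i,j)$ gives
\[ \#(\mathfrak{R},\mathfrak{T}) \;\le\; \sum_{i=1}^{n}\sum_{j=1}^{n}\bigl(\text{deg}(\mathfrak{R}_i)+1\bigr) \;=\; n\sum_{i=1}^{n}\text{deg}(\mathfrak{R}_i)+n^{2} \;=\; nk+n^{2}, \]
so $\text{deg}(f\circ g)\le n(k+n)$, which also confirms Conjecture \ref{Conj} in the case $\text{deg}(f)=0$ and recovers Muller's $n^2$ bound when $k=0$.

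The part I expect to require the most care is not this bookkeeping but two setup issues: (a) checking that ``$\mathfrak{T}$ disjoint from $\mathfrak{G}$'' and ``$\mathfrak{R}$ minimal against $\mathfrak{G}$'' can be arranged simultaneously in a single picture that still represents $f\circ g$, and that the interior surgeries of Theorem \ref{IntThm} preserve this; and (b) confirming that Theorem \ref{IntThm} is really being used in the needed generality, namely for a basis $\mathfrak{R}$ of \emph{arbitrary} degree — its statement is phrased around a degree-$0$ basis, but its bicollar argument works on one component of one sphere at a time and should carry over verbatim, and I would want to state it in that component-wise form before applying it.
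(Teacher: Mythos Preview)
Your argument is correct and is essentially the paper's own proof: count that $\mathfrak{R}$ has $\sum_i(\deg(\mathfrak{R}_i)+1)=n+k$ components in $\widetilde M_n$, observe that $\mathfrak{T}$ consists of $n$ disjoint spheres since $\deg(f)=0$, and then use Theorem~\ref{IntThm} to cap each component--sphere pair at one intersection, yielding $n(n+k)$. Your caveat (b) is well placed --- the paper's statement of Theorem~\ref{IntThm} is written with ``$\deg(g)=0$'' but its bicollar surgery only uses that the $\mathfrak{T}_i$ are spheres, and the paper silently applies it to arbitrary $\deg(g)=k$ exactly as you suggest.
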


\begin{proof} 
    The case of deg$(g) = 0$ follows by Theorem \ref{MullerThm}\cite{Muller2014}.

    Since deg$(f) = 0$, we know that the total number of components that exist in $F_n$ is $n$ because no sphere will intersect with the standard sphere basis. Thus, we have $n$ spheres from $f_{\mathfrak{G}.\mathfrak{T}}$. Now we know that deg$(g) = k$ and by Theorem \ref{IntThm}, each component from a sphere in $g_{\mathfrak{G}.\mathfrak{R}}$ will intersect a sphere in $f_{\mathfrak{G}.\mathfrak{T}}$ at most once.
    
    At this point, our proof becomes a counting argument. We know that $g_{\mathfrak{G}.\mathfrak{R}}$ will consist of $n+k$ components that intersect the $n$ sphere of $f_{\mathfrak{G}.\mathfrak{T}}$ at most once. Hence, the total number of intersections that exist must be at most $n(n+k)$, and our proof is done.
    
\end{proof}

\section{Further Work}
    We have a conjecture for the bound for the number of intersections between two automorphisms of any degree. However, Muller proved this for when both are degree 0 and we have proven this for when one is degree 0. Further work can be done to prove our conjecture for all higher-degree automorphisms of $F_n$.

\section*{Acknowledgements}
I would like to thank Dr. Clifford for introducing me to the topic and giving me direction for the research. I would also like to thank all of the faculty in the Department of Mathematics and Statistics at TCNJ who have helped guide me along the right path and shown me what the intersection of research and teaching looks like. These lessons are ones that I will carry with me as I pursue a PhD in Mathematics at the University of Binghamton.

\newpage
\bibliographystyle{amsplain}
\bibliography{HonorsBib}

\end{document}